\newcommand{\CC}{\mathfrak{C}}
\newcommand{\GG}{\mathfrak{G}}
\newcommand{\II}{\mathfrak{I}}
\newcommand{\SIC}{\mathfrak{I}_S}
\newcommand{\Lk}{\mathrm{Lk}}
\newtheorem{theorem}{Theorem}
\newtheorem{lemma}[theorem]{Lemma}
\newtheorem{definition}[theorem]{Definition}
\newtheorem{conjecture}[theorem]{Conjecture}
\newtheorem{remark}[theorem]{Remark}
\newtheorem{example}[theorem]{Example}
\newtheorem*{theorem*}{Theorem}
\newtheorem*{cor*}{Corollary}
\newtheorem*{prop*}{Proposition}
\newtheorem*{conjecture*}{Conjecture}
\def\kwA{Contractible transformations}
\def\kwB{collapsible graph}
\def\kwC{Vietoris-Rips complex}
\def\kwD{persistent homology}
\definecolor{darkblue}{rgb}{0,0,0.7}
\definecolor{darkred}{rgb}{0.7,0,0}
\def\BState{\State\hskip-\ALG@thistlm}
\begin{document}

\title{Collapsibility and homological properties of $\II$-contractible transformations}

\author[J. F. Espinoza]{Jes\'us F. Espinoza}
\email{jesusfrancisco.espinoza@unison.mx}

\author[M. E. Fr\'ias-Armenta]{Mart\'in Eduardo Fr\'ias-Armenta}
\email{eduardo.frias@unison.mx}

\author[H. A. Hern\'andez-Hern\'andez]{H\'ector Alfredo Hern\'andez-Hern\'andez}
\email{hector.hernandez@unison.mx}
\address{Departamento de Matem\'aticas, Universidad de Sonora, M\'exico}

\begin{abstract}
The family of $\II$-contractible graphs and contractible transformations was defined by A. Ivashchenko in the mid-90's. In this paper we study the collapsibility and homological properties of the clique complex associated to $\II$-contractible graphs. We show that for any graph in a special subfamily of the $\II$-contractible graphs (the strong $\II$-contractible ones) its clique complex is collapsible. Moreover, we present an algorithm that allows us to verify if any graph is strong $\II$-contractible, as well as an algorithm to delete those vertices whose open neighborhood is also strong $\II$-contractible. Finally, we show how to use these algorithms to compute the persistent homology of an arbitrary Vietoris-Rips complex for applications in topological data analysis.
\end{abstract}

\keywords{\kwA, \kwB, \kwC, \kwD.}

\maketitle

\section{Introduction} 
\subsection{Preliminaries}
In this work will only consider finite and simple graphs, let $\GG$ denote the set of such graphs.

An important kind of subgraph for the present work is given by the open neighborhood (or just neighborhood) of any vertex $v$ in $G$, denoted by $N_G(v)$ and defined as the induced subgraph in $G$ by the set of all adjacent vertices to $v$. In the same way, the {\it common neighborhood} of two vertices $v$ and $w$ of $G$, is the subgraph $N_G(v,w)$ induced by the set of vertices which are adjacent to $u$ and $v$.

\begin{figure}[htb]
\begin{minipage}[b]{.49\linewidth}
\centering \includegraphics[width=0.8\linewidth]{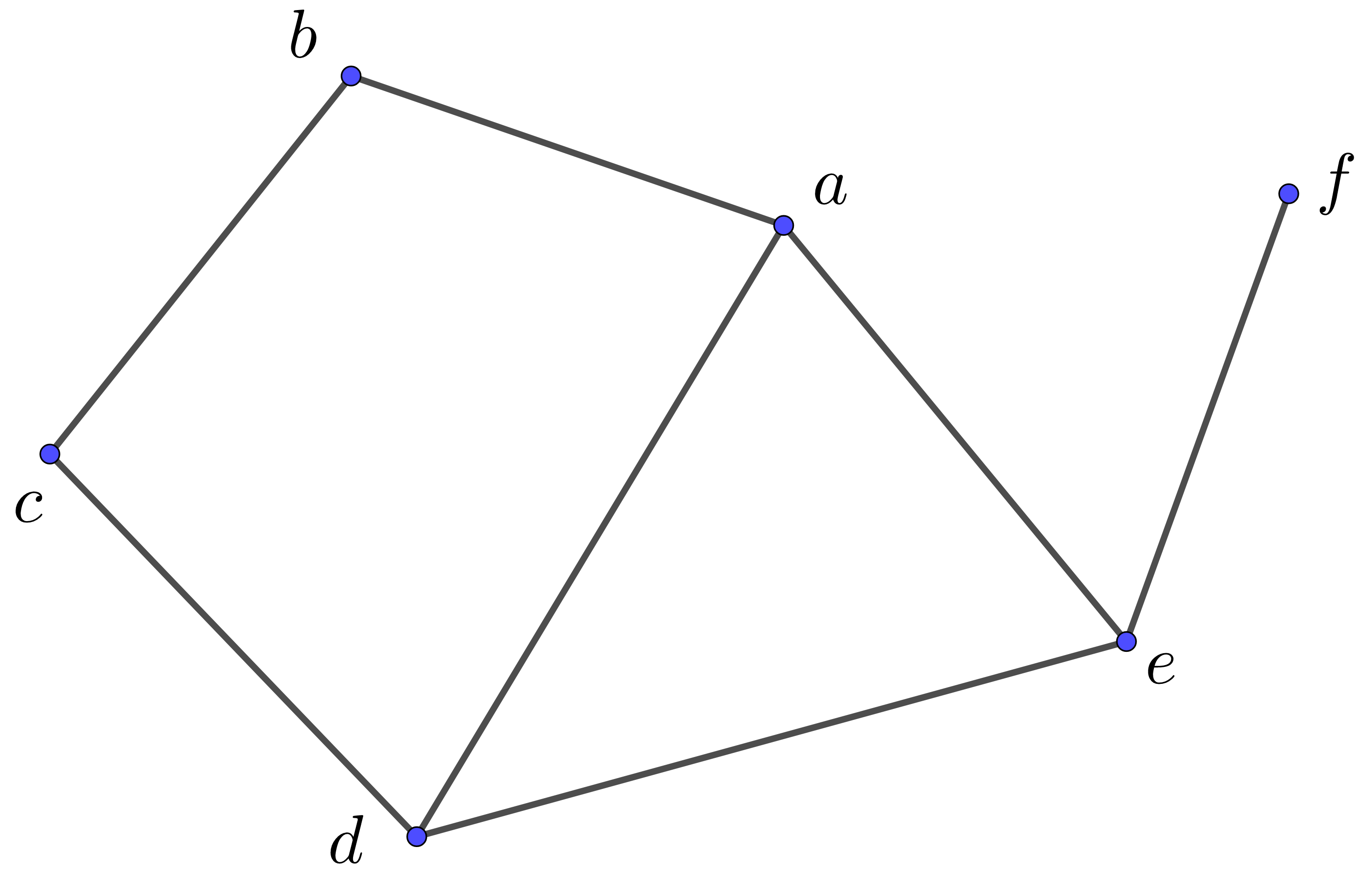}
\end{minipage}
\begin{minipage}[b]{.49\linewidth}
\centering \includegraphics[width=0.8\linewidth]{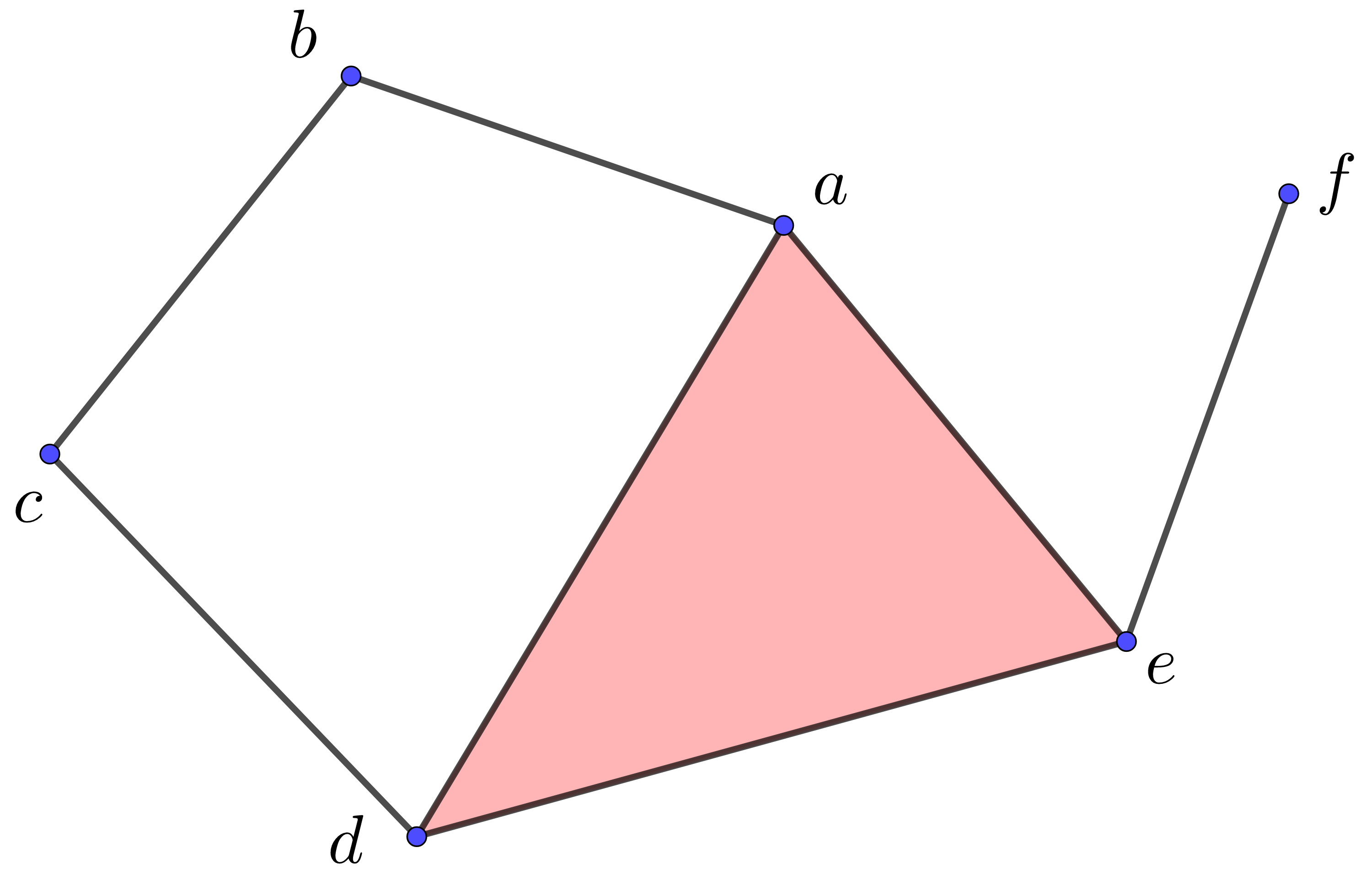}
\end{minipage}
\caption{A graph $G$ and its clique complex $\Delta (G)$.}\label{fig:1}
\end{figure}

For instance, the open neighborhood of the vertex $a$ in graph $G$ shown to the left in Figure \ref{fig:1} is the subgraph $N_G(a)$ with vertices $V(N_G(a)) = \{b, d, e\}$ and edges $ E(N_G(a)) = \{\{d, e\}\}$. Also, the common neighborhood of vertices $a,c \in V(G)$ is the subgraph $N_G(a,c)$ with vertices $\{b,d\}$ and no edges. For basic concepts in graph theory you can see \cite{harary2019graph}.

\textit{Simplicial complexes} can be thought of as the higher-dimensional analogues of graphs (see \cite[Chapters 2-3]{Kozlov:2007} for an excellent introduction to abstract and geometric simplicial complexes, as well as its (co)homology groups). More precisely, a finite (abstract) simplicial complex is a finite set $V$ together with a collection $\Delta \subseteq 2^V$ closed under inclusion, i.e., if $\tau \in \Delta$ and $\sigma \subseteq \tau$, then $\sigma \in \Delta$. An element $\sigma \in \Delta$ is called \textit{simplex}; if additionally $\sigma \in V$, then $\sigma$ is called a \textit{vertex} of $\Delta$. For two simplices $\sigma,\tau \in \Delta$ such that $\sigma \subset \tau$, we say that $\sigma$ is a \emph{face} of $\tau$ and, if there is no longer simplices for which $\tau$ is a face, then $\tau$ is called a \emph{maximal face} of $\Delta$. The dimension of the simplex $\sigma$ is defined as $\dim(\sigma) = |\sigma|-1$. Therefore, the $0$-simplices are just the vertices of $\Delta$, the $1$-simplices are the edges, $2$-simplices are the triangles, and so on. We denote by $\Delta^n$ the complete simplicial structure associated with the set $\{0,1,\ldots, n\}$. 

Given a simplicial complex $\Delta$, a couple of simplices $\sigma, \tau \in \Delta$ are called a \textit{free pair} if the following two conditions are satisfied: (1) $\sigma \subsetneq \tau$ and, (2) $\tau$ is a maximal face of $\Delta$ with respect to inclusion, and no other maximal face of $\Delta$ contains $\sigma$. For instance, the simplicial complex $\Delta = \{\emptyset, \{a\}, \{b\}, \{c\}, \{a,b\}, \{b,c\}\}$ has exactly two free pairs: $(\{a\}, \{a,b\})$ and $(\{c\}, \{b,c\})$.

A \textit{simplicial collapse} of $\Delta$ in $\tilde{\Delta}$ is obtained by removal of all simplices $\gamma\in \Delta$ such that $\sigma \subseteq \gamma \subseteq \tau$ provided that $(\sigma, \tau)$ is a free pair, we will write $\Delta \searrow \tilde{\Delta}$. Additionally, if $\dim(\tau) = \dim(\sigma) + 1$, then $\Delta \searrow \tilde{\Delta}$ is called an \textit{elementary simplicial collapse}. It is not hard to see that any simplicial collapse can be realized by elementary ones (cf. \cite[Sec. III.4]{Edelsbrunner:08}). In any case, the removal of the free pair $(\sigma,\tau)$ of $\Delta$ is denoted by $C(\Delta; (\sigma,\tau)) = \Delta-(\sigma,\tau)$.

For $n\geq 1$ if there are $\Delta_1, \Delta_2, \ldots , \Delta_n$ simplicial complexes such that $\Delta_1 \searrow \Delta_2 \searrow \cdots \searrow \Delta_n$, we say that $\Delta_1$ is collapsible to $\Delta_n$. We denote this also by $\Delta_1 \searrow \Delta_n$. In particular, if $\Delta$ is collapsible to $\Delta^0$ we just say that $\Delta$ is collapsible.  We write \[C(\Delta; ((\sigma_1,\tau_1),(\sigma_2,\tau_2), \ldots, (\sigma_k,\tau_k))) = C(C(\Delta; (\sigma_1,\tau_1)); ((\sigma_2,\tau_2), \ldots, (\sigma_k,\tau_k))),\] for a sequence of free pairs collapses.

The inverse process of a simplicial collapse is called a simplicial expansion, i.e., if $\Delta$ is a simplicial complex and $(\sigma, \tau)$ is a free pair, then we say that $\Delta - (\sigma, \tau)$ expands to $\Delta$, which is equivalent to say $\Delta$ collapses to $\Delta - (\sigma, \tau)$. We say that a simplicial complex $\Delta$ is \textit{contractible} if it can be transformed to $\Delta^0$ by a series of collapses and expansions. Of course, any collapsible simplicial complex is also contractible but the converse is not true, take as example the \emph{dunce hat} (see \cite{Zeeman:1963}) or the \emph{Bing's house} (see \cite{Cohen:1973}).

Let $G$ be a graph, the \textit{clique complex} $\Delta(G)$ (see \cite[Sec. 9.1]{Kozlov:2007}) is defined as the simplicial complex obtained by considering any \textit{complete subgraph} $K_n \subset G$, i.e., a graph on $n$ vertices and all possible edges (according the notation in \cite{harary2019graph}), as an $(n-1)$-simplex,
\[\Delta(G) := \{ \sigma \subset G \mid \sigma \mbox{ is a complete subgraph of } G \}.\]

The picture to the right in Figure \ref{fig:1} shows the clique complex $\Delta(G)$ of the graph $G$ shown to the left. The $1$-skeleton of $\Delta(G)$ can be identified with $G$ itself.

\begin{definition}
Let $G$ be a graph. If $\Delta(G)$ is collapsible, then the graph $G$ is called collapsible. We denote the family of all collapsible graphs by $\CC$.

Moreover, if $\Delta(G)$ is a contractible simplicial complex, then $G$ is called contractible.
\end{definition}

For example, any complete graph $K_n$ is collapsible (and also contractible): $\Delta(K_n) \searrow \Delta^0$. Also, the graph $G$ shown in Figure \ref{fig:1} corresponds to a non-collapsible graph, because once all the free pairs are deleted in the clique complex $\Delta(G)$, a square with vertices $a$, $b$, $c$ and $d$, is obtained, which no longer has free pairs, so it is impossible to continue the collapsing process until you get a point. Such collapses can consists in deleting first the free pair $(\{f\}, \{e, f\})$ and then the now free pair $(\{e\}, \{a, d, e\})$ in $\Delta(G)$. On the other hand, Example \ref{example-octahedron} shows a graph (Figure \ref{figure:octahedron2}) that is collapsible.

In \cite{Ivashchenko:homology} the following family of graphs was defined, and its elements are called $\II$-contractible graphs.

\begin{definition}\label{IF} We denote by $\II$ the family {\bf $\II$-contractible graphs}, defined as follows:
\begin{enumerate}
\item The trivial graph $K_1$ is in $\II$.
\item We add a graph $G'$ to $\II$ through the next four operations:

\begin{enumerate}
\item[(I1)] Deleting a vertex $v$. We add the graph $G'$ to $\II$, if it is obtained by deleting a vertex $v$ of a graph $G\in\II$ when $N_G(v)\in \II$.
\item[(I2)] Gluing  a vertex $v$.  We add the graph $G'$ to $\II$, if it is obtained by adding a vertex $v$ to a graph $G\in \II$ in such a way that $N_G(v)\in \II$.
\item[(I3)] Deleting an edge $\{v_1,v_2\}$. We add the graph $G'$ to $\II$, if it is obtained by deleting an edge $\{v_1,v_2\}$ of a graph $G\in\II$ when $N_G(v_1,v_2) \in \II$.
\item[(I4)] Gluing an edge $\{v_1,v_2\}$.  We add the graph $G'$ to $\II$, if it is obtained by adding an edge $\{v_1,v_2\}$ to a graph $G\in\II$ when $N_G(v_1,v_2) \in \II$.
\end{enumerate} 
\end{enumerate}
We call $G$ a $\II$-contractible graph if $G$ belongs to $\II$. A vertex $v \in V(G)$ (resp. edge $\{v_1, v_2\} \in E(G)$) is called $\II$-contractible if $N_G(v)$ (resp. $N_G(v_1, v_2)$) is $\II$-contractible.
\end{definition}

For any vertex $v \in G$, we write $I(G; v) = G - v$ when  $N_G(v)$ is in the family $\II$. Let $S = (v_1, \ldots, v_k) \subset V(G)$ be a nonempty ordered subset of vertices of the graph $G=(V, E)$. We call $S$ a \textit{sequence of $\II$-contractible vertices} of $G$ if $v_1$ is an $\II$-contractible vertex of $G_1=G$ and, for $k\geq 2$, $v_i$ is $\II$-contractible in $G_{i} = I(G_{i-1}; v_{i-1})$ for any $2 \leq i \leq k$. If $S =(v_1, \ldots, v_k)$ is a sequence of $\II$-contractible vertices of the graph $G$, we define $I(G; S) = I( I(G;v_1); (v_2, \ldots, v_k))$.

A remarkable subfamily of the $\II$-contractible graphs is the family of graphs $\SIC$ obtained from $K_1 \in \SIC$ by gluing a vertex $v$ to any graph $G\in \SIC$ in a subgraph $G' \subset G$ such that $G' \in \SIC$, or gluing an edge $\{v_1,v_2\}$ between two nonadjacent vertices $v_1, v_2 \in G$ when $N_G(v_1,v_2) \in \SIC$. The family $\SIC$ is the main subject of study in this paper. Our goals are, firstly to identify the relationship of $\SIC$ with respect to other more studied families of graphs, and secondly to take advantage of its recursive construction and nice homological properties in order to construct algorithms to compute persistent homology, as an application in Topological Data Analysis (see Section \ref{Appli}).

\begin{definition}\label{TSI} Let $\SIC \subset \GG$ be the family of graphs defined by:
\begin{enumerate}
\item The trivial graph $K_1$ is in $\SIC$.
\item We add a graph $G'$ to $\SIC$ through the next two operations:
\begin{enumerate}
\item[(I2)] Gluing  a vertex $v$.  We add the graph $G'$ to $\SIC$, if it is obtained by adding a vertex $v$ to a graph $G\in \SIC$ in such way that $N_G(v)\in \SIC$.
\item[(I4)] Gluing an edge $\{v_1,v_2\}$.  We add the graph $G'$ to $\SIC$, if it is obtained by adding an edge $\{v_1, v_2\}$ to a graph $G \in \SIC$ when $N_G(v_1,v_2) \in \SIC$.
\end{enumerate} 
\end{enumerate}
If $G$ belongs to $\SIC$, then $G$ is called a strong $\II$-contractible graph. 
\end{definition}

Any complete graph $K_n$ and any tree are examples of strong $\II$-contractible graphs. In \cite{chen} it was proven that the Bing's house is $\II$-contractible but not strong $\II$-contractible. On the other hand, the graph in Figure \ref{fig:1} is neither $\II$-contractible nor strong $\II$-contractible.

\subsection{Our contributions}
In Section \ref{Collpasible-graphs} we prove the following statement.

\newtheorem*{inclusionIC}{Theorem \ref{inclusionIC}}
\begin{inclusionIC}
If $G$ is a strong $\II$-contractible graph, then it is a collapsible graph.
\end{inclusionIC}

We prove Theorem \ref{inclusionIC} by double induction on the number of vertices and edges of the graph and the dimension of its clique complex. Additionally, we conjecture that both families agree, $\SIC = \CC$, but we do not have a complete proof; however, we have extensive computational evidence that support such conjecture. 
\begin{conjecture}\label{SIV=C}
The family of strong $\II$-contractible graphs is the same as the family of collapsible graphs.
\end{conjecture}
Moreover, we conjecture that such relationship can be stated in the family of $\II$-contractible graphs.
\begin{conjecture}
The family of $\II$-contractible graphs is the same as the family of contractible graphs.
\end{conjecture}

In the conclusion of \cite{Lofano}, the authors presented a simplicial complex $C^8_3$ that is contractible but is not collapsible or strong-expandable. We think that the $1$-skeleton of the barycentric subdivision of $C^8_3$ could be a counterexample to the claim in \textit{Theorem 3.7} of \cite{Ivashchenko:contractible}, one of the just three results that was not refuted in \cite{Frias}.

We begin Section \ref{CACG} with an example for showing that Theorem \ref{inclusionIC} is not trivially reversible. Given the difficulty to prove our conjecture $(\CC \subset \SIC)$, we support it with the help of a computer. Thus, we have verified, with graphs of up to 9 vertices, the conjecture that both graph families coincide. To do that, first we generated a limited bank of graphs that were representative of their isomorphism class. This set started with the graph of a single point $K_1$, then we generated new graphs using two operations: (1) we connected a new vertex through an edge and (2) we added a new edge. Then we verified it was not in the bank, and we added it to the bank. Once this had been achieved, we applied algorithms to determine if they belonged to both $\CC$ (exhaustive searching of free pairs) and $\SIC$ (Algorithm \ref{Alg:contractible.graph}) families or not. 

The use of graphs and simplicial structures have recently had a huge boom in applications for data analysis. Such applications belong to the area of Topological Data Analysis (TDA), whose goal is to identify geometric properties of the underlying space in which a (finite) point cloud belongs; these points are obtained through experiments, simulations, databases, recordings, etc. The \textit{simplicial homology} is a fundamental tool to perform such analysis, more precisely: \textit{simplicial persistent homology}. The key reason for that, is because the distinct groups of homology encode, in a certain sense, the shape of a geometric structure.

The workflow of the TDA is as follows: given a collection of data in which there is a notion of distance, a filtered simplicial structure is constructed, then the persistence homology groups are calculated and represented by a barcode or persistence diagram. Finally, based on this information, the shape of the data is inferred, indicating (up to topological noise) the number of connected components and the number of higher-dimensional holes (persistent Betti numbers).

For an introduction to the fundamentals of TDA and an excellent panoramic survey about concepts, applications and computational tools, see \cite{Chazal2021}.

In Section \ref{Appli} we show an application of the $\II$-contractible transformations to computation of the homology of a special kind of simplicial complexes: given a non-negative real number $\varepsilon$ (\textit{proximity parameter}) and $N$ a (finite) \textit{point cloud} in a metric space, the Vietoris-Rips complex $\mathrm{VR}(N; \varepsilon)$ is the simplicial complex with a $\sigma \subset N$ being a simplex if, and only if, the distance between any two points in $\sigma$ is less than or equal to $\varepsilon$.

One of the last results of this paper proves that computing the persistent homology of the Vietoris-Rips complex is equivalent to computing the homology groups of the corresponding underlying graphs reduced by contractible transformations. This result is proven in Section \ref{Appli}.

\newtheorem*{persistence-preserved}{Theorem \ref{persistence-preserved}}
\begin{persistence-preserved}
Let $\mathrm{VR}_0 \subset \mathrm{VR}_1 \subset \cdots \subset \mathrm{VR}_m$ be the Vietoris-Rips filtration of any finite point cloud, and let $H_p^{i,j}$ be the $(i,j)$-persistent $p$-homology group of the filtration.  If $G_i := \mathrm{VR}_i^{(1)}$ given by the 1-skeleton of the complex $\mathrm{VR}_i$ and $S_i$ is a sequence of $\II_S$-contractible vertices of $G_i$, for all $1 \leq i \leq m$, then
\begin{equation*}
H_p^{i,j} \cong \mathrm{Im}(\iota_* : H_p(\Delta(I(G_i; S_i))) \to H_p(\Delta(I(G_j; S_j)))).
\end{equation*}
\end{persistence-preserved}

What is remarkable about this result is that even when such a collection of (strong $\II$-contractible transformed) graphs is not a filtration, there is an induced homomorphism at the level of the homology groups that allows us to recover the persistent homology of the Vietoris-Rips complex. This approach is a topic of interest, as is shown in references {\cite{Boissonnat:2018}}-{\cite{Boissonnat:2019}} published a posteriori of a preprint of this paper, first available online in {\cite{EFH}}. 

We begin our study in Section \ref{GH-CT} by defining the graph homology groups as were originally presented in \cite{Ivashchenko:homology} and \cite{Ivashchenko:contractible}. In the section we show the existence of an induced homomorphism between the homology groups of the clique complexes of two graphs, induced by the image of contractible transformations (Lemma \ref{diagram}), a key result that permits the extension of the full theory of $\II$-contractible transformations to study the persistent homology.

\section{Collapsibility properties of \texorpdfstring{$\II$}{I}-contractible transformations}\label{Collpasible-graphs}

We start the section by introducing some notation about simplicial complexes (see \cite{Kozlov:2007}). Let $\Delta$ be a simplicial complex and let $\alpha$ be a simplex of $\Delta$. The \emph{link} of $\alpha$ is the simplicial subcomplex of $\Delta$, denoted by $\mathrm{Lk}(\alpha; \Delta)$ and defined by
\[\mathrm{Lk}(\alpha; \Delta) := \{\gamma \in \Delta \mid \gamma \cap \alpha = \emptyset \mbox{ and } \alpha \cup \tau \in \Delta\}.\]
The \emph{deletion} of $\alpha$ is the simplicial subcomplex of $\Delta$, denoted by $\mathrm{del}_\Delta(\alpha)$ and defined by
\[\mathrm{del}_\Delta(\alpha) := \{\gamma \in \Delta \mid \gamma \nsupseteq \alpha\}.\]
On the other hand, if $\Delta_1$ and $\Delta_2$ are two simplicial complexes whose vertices are indexed by disjoint sets, then the \emph{join} of $\Delta_1$ and $\Delta_2$ is the simplicial complex $\Delta_1 * \Delta_2$ whose set of vertices is $V(\Delta_1) \cup V(\Delta_2)$ and the set of simplices is given by
\[\Delta_1 * \Delta_2 := \{\sigma \subseteq V(\Delta_1) \cup V(\Delta_2) \mid \sigma \cap V(\Delta_1) \in \Delta_1 \mbox{ and } \sigma \cap V(\Delta_2) \in \Delta_2\}.\]
For two simplices $\sigma, \tau \in \Delta$ such that $\sigma \cap \tau = \emptyset$, its join $\sigma * \tau$ can be thought as the simplex with set of vertices $\sigma \cup \tau$, even if $\sigma * \tau \not\in \Delta$.

The next theorem proves that every strong $\II$-contractible graph is also a collapsible graph.
\begin{theorem}\label{inclusionIC}
If $G$ is a strong $\II$-contractible graph, then it is a collapsible graph.
\end{theorem}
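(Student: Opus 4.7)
I would induct on $|V(G)| + |E(G)|$. The base case is immediate: $|V(G)| + |E(G)| = 1$ forces $G = K(1)$ and $\Delta(G) = \Delta^0$ is trivially collapsible. For the inductive step, $G$ arises from a predecessor by transformation (I2) or (I4) of Definition \ref{TSI}, and both cases reduce to a single ``cone-lifting'' argument.

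Case (I2): $G$ is obtained from some $G^- \in \SIC$ by gluing a new vertex $v$ with $N_G(v) = G_1$, where $G_1 \in \SIC$. Since $v \notin V(G^-)$ and $v \notin V(G_1)$, both $G^-$ and $G_1$ have strictly smaller $|V|+|E|$ than $G$; the inductive hypothesis gives $\Delta(G^-), L := \Delta(G_1) \in \CC$. Set $K = \Delta(G)$ and $K^- = \Delta(G^-)$; then $K \setminus K^-$ consists of $\{v\}$ together with $\{\{v\} \cup \sigma : \sigma \in L\}$, an open cone with apex $v$ over $L$. Fix a collapse sequence $L \searrow \Delta^0$ witnessed by elementary free pairs $(\sigma_1,\tau_1),\dots,(\sigma_m,\tau_m)$, ending at a single vertex $\{w\}$. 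Lift each pair to $(\{v\}\cup\sigma_i,\{v\}\cup\tau_i)$ in $K$. The point to check is that each lifted pair is free at its turn: any coface of $v$ in $K$ has the form $\{v\}\cup\sigma'$ with $\sigma'\in L$, so the maximality of $\tau_i$ in the partially-collapsed $L_{i-1}$ and its uniqueness as the maximal coface of $\sigma_i$ transfer verbatim to the corresponding lifted simplices in $K$. After these $m$ elementary collapses only $\{v\}$ and $\{v,w\}$ survive among the new simplices, and the pair $(\{v\},\{v,w\})$ is then manifestly free. This yields $K \searrow K^-$, and composition with $K^- \searrow \Delta^0$ proves $G \in \CC$.

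Case (I4) is structurally identical. Here $G$ is obtained from $G^* \in \SIC$ by gluing an edge $\{v_1,v_2\}$ with $N_G(v_1,v_2) = G_1 \in \SIC$; since $v_1,v_2 \notin V(G_1)$, both $G^*$ and $G_1$ have strictly smaller $|V|+|E|$. The new simplices of $K = \Delta(G)$ relative to $K^* = \Delta(G^*)$ are $\{v_1,v_2\}$ together with $\{\{v_1,v_2\}\cup\sigma : \sigma \in L := \Delta(G_1)\}$. Lifting a collapse sequence of $L$ along $\sigma \mapsto \{v_1,v_2\}\cup\sigma$, and finishing with the elementary free pair $(\{v_1,v_2\},\{v_1,v_2,w\})$, realizes $K \searrow K^* \searrow \Delta^0$.

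The step I expect to be the main obstacle is verifying that each lifted pair is genuinely free in $K$ at the moment it is applied, not merely in $L$ at the analogous moment. The structural observation that makes this work is that every proper coface of the ``base'' (the vertex $v$ or the edge $\{v_1,v_2\}$) is forced to live inside the cone over $L$, so the maximality and uniqueness conditions defining a free pair in $K_{i-1}$ are in bijection with those in $L_{i-1}$. Once this bijection is articulated carefully, both cases reduce cleanly to the predecessor complex and the induction closes.
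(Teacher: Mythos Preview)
Your proof is correct and follows essentially the same strategy as the paper: identify the last-glued $\alpha$ (vertex or edge), lift a collapse of $\Lk(\alpha;\Delta(G)) = \Delta(N_G(\alpha))$ through the join with $\alpha$ to obtain $\Delta(G) \searrow \Delta(G-\alpha)$, and then recurse on the predecessor. The only minor difference is bookkeeping: the paper sets up a lexicographic double induction on $(\dim\Delta(G),\,|V|+|E|)$, invoking the dimension drop for the link and the size drop for the predecessor, whereas you use a single induction on $|V|+|E|$, which is marginally cleaner since both $G_1$ and the predecessor already have strictly smaller size.
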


\begin{proof}
Let $n = \dim(\Delta(G))$ be the dimension of the clique complex $\Delta(G)$, and let  $k = |V|+|E|$ denote the sum of its number of vertices and its number of edges. 

We will prove the claim by induction on $n$ and $k$. The claim is true for $n=0$ or if $k=1$, in which case $\Delta(G) = \Delta^0$.

Now suppose that the claim is true for all $m<n$ and for $j<k$ if $m=n$.
 
Let $\alpha\in G$ be a vertex or edge such that $N_G(\alpha) \in \SIC$, then it follows from the induction hypothesis over $n$ that 
\begin{linenomath*}
\[ \Delta(N_G(\alpha)) = \Lk(\alpha;\Delta(G)) \searrow \Delta^0. \]
\end{linenomath*}

Therefore, there exists a sequence of elementary collapses and their corresponding free pairs $(\sigma_i, \tau_i)$, $\dim(\tau_i) = \dim(\sigma_i) + 1$, in $\Lk(\alpha;\Delta(G))$ such that 
\begin{linenomath*}
\[ \Delta^0 = (\cdots ((\Delta(N_G(\alpha))-(\sigma_1,\tau_1))-(\sigma_2,\tau_2))\cdots ). \]
\end{linenomath*}

On the other hand, each couple $(\alpha*\sigma_i, \alpha*\tau_i)$ is also a free pair of the simplicial subcomplex $\alpha*\Lk(\alpha; \Delta(G))$ for each $i$. In fact, they are free pair in the clique complex $\Delta(G)$. Thus, 
\begin{linenomath*}
\[ \alpha*\Lk(\alpha;\Delta(G)) \searrow \alpha*\Delta^0 \searrow \Delta^0 \]
\end{linenomath*}
and consequently, $\Delta(G) \searrow \Delta(G-\alpha)$. Now $|V(G-\alpha)|+|E(G-\alpha)|\leq k-1$ and by the induction hypothesis over $k$, we have $\Delta(G-\alpha) \searrow \Delta^0$, which induces the desired simplicial collapses $\Delta(G) \searrow \Delta(G-\alpha) \searrow \Delta^0$.
\end{proof}

On the other hand, let us recall that the class of collapsible simplicial complexes can be recursively defined as:
\begin{itemize}
    \item The empty complex and the 0-simplex are collapsible.
    \item If $\Delta$ contains a nonempty face $\sigma$ such that $\mathrm{del}_{\Delta}(\sigma)$ and $\mathrm{Lk}(\sigma; \Delta)$ are collapsible, then $\Delta$ is collapsible.
\end{itemize}

If we restrict the face removing in such definition to 0-cells or 1-cells, then we get a subfamily of collapsible simplicial complexes, which we will refer to as \textit{1-sk collapsible simplicial complexes}, and this subfamily can also be defined recursively:
\begin{itemize}
    \item The empty complex and the 0-simplex are 1-sk collapsible.
    \item If the simplicial complex $\Delta$ contains a nonempty 0-cell or 1-cell $\sigma$ such that $\mathrm{del}_\Delta(\sigma)$ and $\mathrm{Lk}(\sigma; \Delta)$ are 1-sk collapsible, then $\Delta$ is 1-sk collapsible. 
\end{itemize}

Now, we can define a graph $G$ to be \textit{1-sk collapsible} if its clique complex $\Delta(G)$ is 1-sk collapsible. This yields to the following lemma.

\begin{lemma}
Let $G$ be a graph. Then $G \in \SIC$ if, and only if, $G$ is 1-sk collapsible.
\end{lemma}
\begin{proof}
For any graph $G$, let $k(G) = |V(G)|+|E(G)|$ denote the sum of its number of vertices and its number of edges. 

Let $G \in \SIC$ be a strong $\II$-contractible graph. We will prove that $\Delta(G)$ is 1-sk collapsible. Let $\alpha\in G$ be a vertex or edge such that $N_G(\alpha)$ and $G-\alpha$ belongs to $\SIC$, then it follows by induction over $k(G)$ that $\Delta(G)$ is 1-sk collapsible.

On the other hand, if $G$ is such that $\Delta(G)$ is 1-sk collapsible, then there is a 0-cell or 1-cell $\alpha \in \Delta(G)$ such that $\Lk(\alpha;\Delta(G))$ and $\mathrm{del}_{\Delta(G)}(\alpha)$ are 1-sk collapsible, but
\[\Lk(\alpha;\Delta(G)) = \Delta(N_G(\alpha))\]
and 
\[\mathrm{del}_{\Delta(G)}(\alpha) = \Delta(G - \alpha).\]
Then, from the induction hypothesis over $k(G)$, it follows that $N_G(\alpha)$ and $G - \alpha$ are strong $\II$-contractible. Therefore $G \in \SIC$. 
\end{proof}

As we claim in Conjecture \ref{SIV=C}, we conjecture that the family of strong $\II$-contractible graphs is exactly the family of collapsible graphs, i.e., the 1-sk collapsibility of any simplicial clique complex is a sufficient condition for its collapsibility.

\subsection{Computational approach in the study of the contractible graphs}\label{CACG} 
To verify that $\SIC\subseteq\CC$, we can use the sequence of contractible transformations as a guide to make the collapses, as shown in Theorem \ref{inclusionIC}. It is, however, not possible the other way. We now examine the example below.

\begin{example}\label{example-octahedron}
Let $G$ be the graph to the left in Figure \ref{figure:octahedron2}. The clique complex $\Delta (G)$ is given at the right in the picture. 

\begin{figure}[H]
\begin{minipage}[t]{0.49\linewidth}
\centering
	\includegraphics[width=0.6\linewidth]{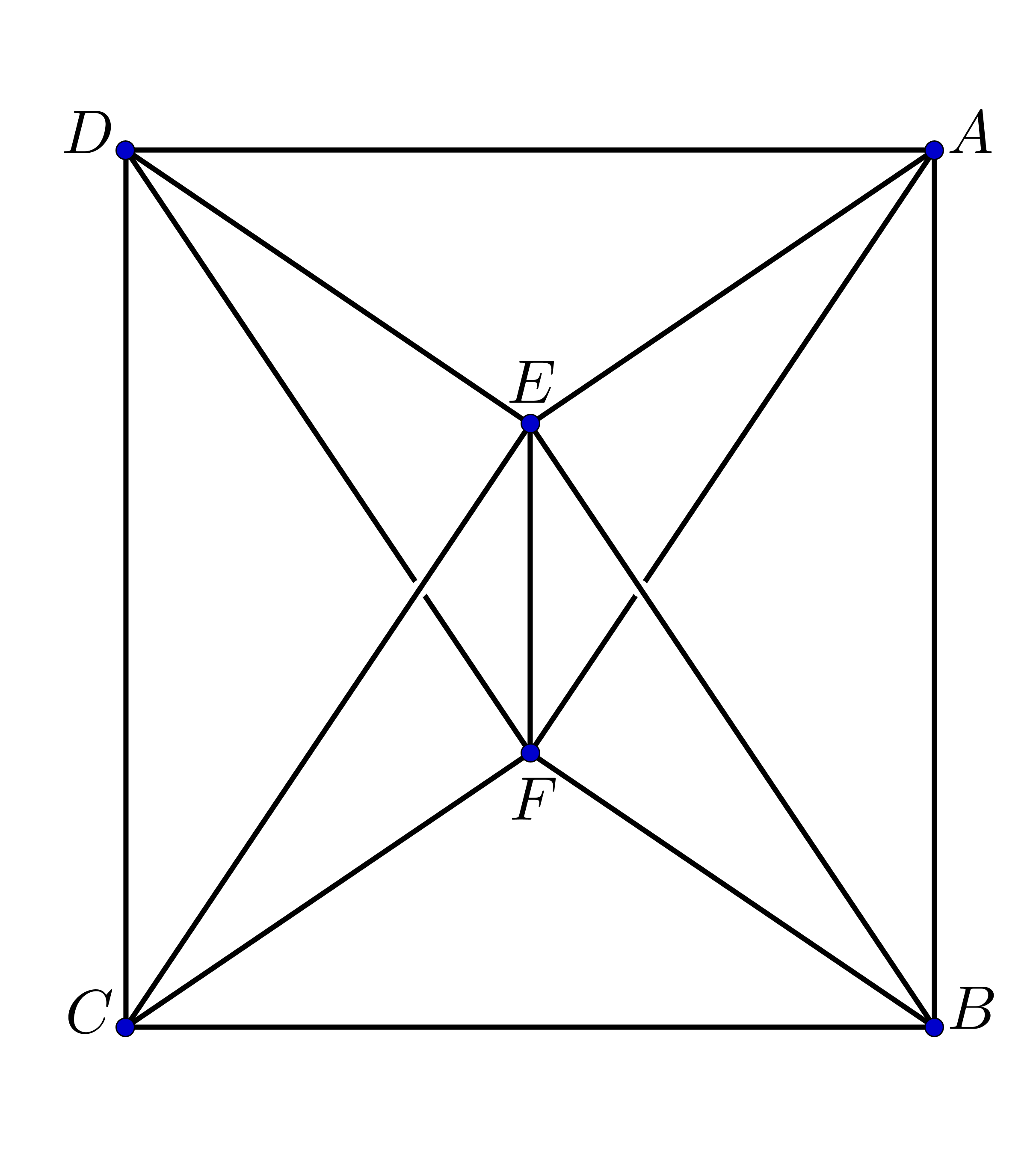}
\end{minipage}
\hfill
\begin{minipage}[t]{0.49\linewidth}
    \includegraphics[width=0.6\linewidth]{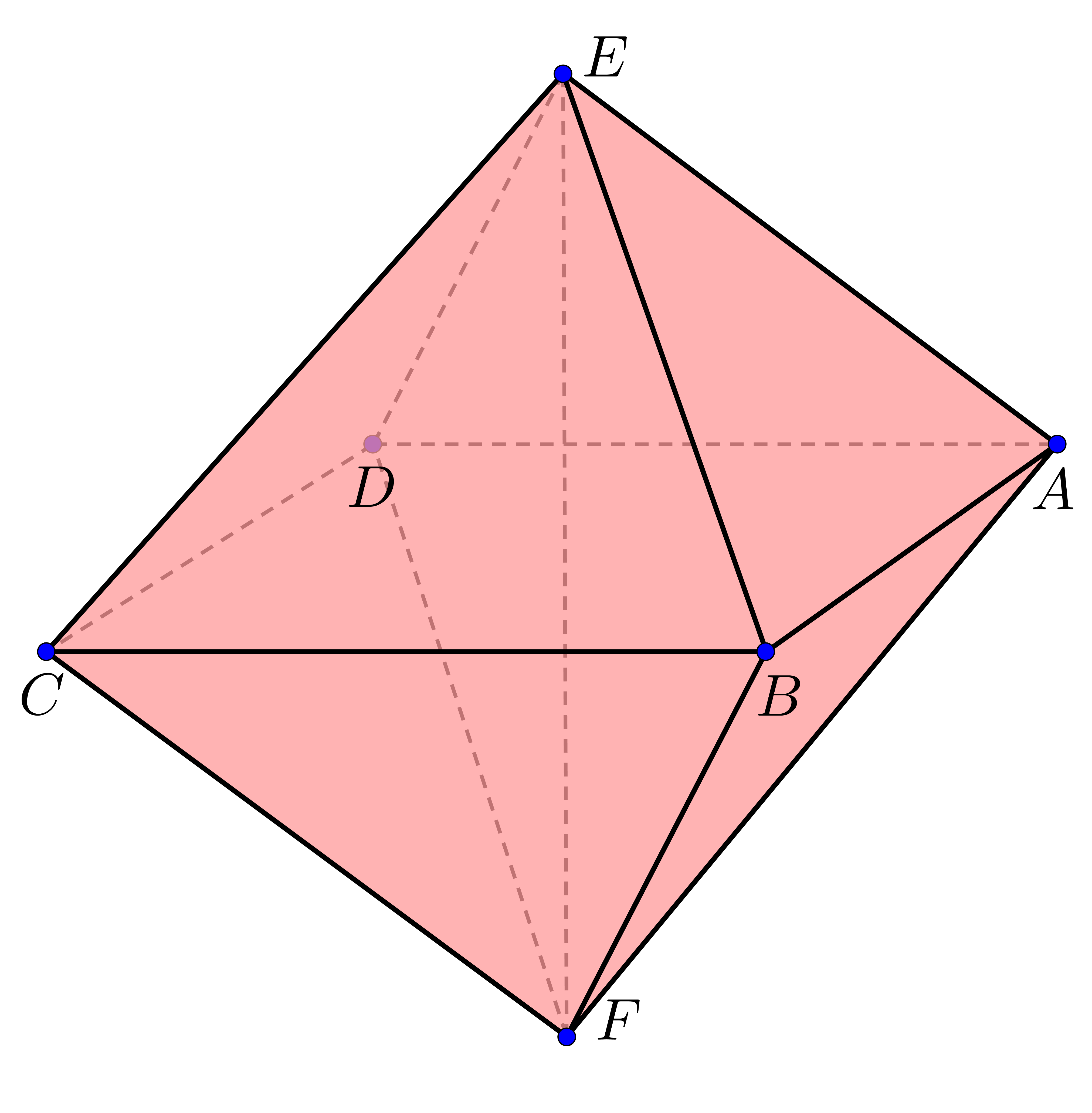}
\end{minipage} 
\caption{The graph $G$ and its clique complex $\Delta (G)$.}
\label{figure:octahedron2}
\end{figure}

Clearly $G \in \SIC$, and $\Delta (G)$ is collapsible, i.e., $G \in \CC$. In order to reverse the arguments in Theorem \ref{inclusionIC}, any removing sequence of free pairs should induce a sequence of contractible transformations. The collapses, however, cannot induce a contractible transformation:
\begin{linenomath*}
\begin{align*}
\Delta_0 := \Delta(G) &\searrow \Delta_1 := C(\Delta_0 ; (\{A,B,E\}, \{A,B,E,F\})) \\
& \searrow  \Delta_2 := C(\Delta_1 ; (\{A,E,F\}, \{A,D,E,F\})) \\
& \searrow  \Delta_3 := C(\Delta_2 ; (\{D,E,F\}, \{C,D,E,F\})) \\
& \searrow  \Delta_4 := C(\Delta_3 ; (\{C,E,F\}, \{B,C,E,F\})) \\
& \searrow  \Delta_5 := C(\Delta_4 ; (\{E,F\}, \{B,E,F\})).
\end{align*}
\end{linenomath*}
In fact, the 1-skeleton of $\Delta_5$ is not a contractible graph, and $N_G(\{E,F\}) \not\in \II$.
\end{example}

A vertex $v \in G$ is \textit{dominated} or \textit{dismantlable} in the graph $G$ if there exists $w \in G$, $w\neq v$, such that $N_G[v] \subseteq N_G[w]$, where $N_G[v] := N_G(v) \cup \{v\}$ denotes the closed neighborhood of the vertex $v$. A graph $G$ is \textit{dismantlable} if there is a sequence $v_1, \ldots, v_n$ of all its vertices such that $v_i$ is dismantlable in $G - \{v_1, \ldots, v_{i-1}\}$. In \cite{Prisner} it was proven that any dismantlable graph is contractible; the definition of a dominated vertex in graphs appears in \cite{Escalante} as a partial order, and the concept of dismantling appears as taking some maximal classes, see \cite[Th. 2]{Escalante} and the comments before it. But it is not until the work in \cite{Nowakowski} that a vertex is defined as dominated or dismantlable exactly as it is now conceived. In \cite{Boulet} this family was extended to $s$-dismantlable graphs. Furthermore, the $s$-moves were defined, and it was proven that $s$-moves on a graph $G$ do not change the homology of the clique complex $\Delta(G)$, hence for any $s$-dismantlable graph $G$ the clique complex $\Delta(G)$ has trivial homology.

\begin{remark}\label{WS}
In \cite{Boulet}, the authors present the concept of $ws$-dismantlable; they say that a vertex $v$ in a graph $G$ is $ws$-dismantlable if $N_G(v)$ is dismantlable. An edge $\{u,v\}$ in the graph $G$ is called $ws$-dismantlable if $N_G(u) \cap N_G(v)$ is dismantlable. Obviously $\{E,F\}$ is not $ws$-dismantlable in $\Delta_4$. Thus, a collapse of a free pair is, in general, neither a contractible transformation nor $ws$-dismantling. Compare this remark to the proof of Lemma 4.4 of \cite{Boulet}.
\end{remark}

Despite Example \ref{example-octahedron}, we conjecture that the reverse inclusion in Theorem \ref{inclusionIC} is also true: In other words, that any strong $\II$-contractible graph is also a collapsible one. In this section, we show algorithms we have used to verify the inclusion $\CC \subset \SIC$ to several graphs. These algorithms were written in \texttt{C/C++} and are available in the repository \cite{EGC}.

To support the conjecture ($\CC \subset \SIC$), a subset of graphs of these families will be calculated, restricting the number of vertices to $n<10$.

Since connectivity is a common characteristic of these families, the strategy is  first to  obtain a collection of related graphs, then for each graph, two algorithms will be applied to determine if $G \in \SIC$ and if $G\in \CC$. The codes and the results will be found in the aforementioned repository.

\subsection{Connected graphs}

We generate the collection of connected graphs by building them up from $K_1$. Given a connected graph, we can obtain a new one by making the modifications
\begin{itemize}
\item  Add vertex $v_{n+1}$ and edge $\{v_i,v_{n+1}\}$ for some $i$ from $\{1,2,\dots,n\}$,
\item  Add edge $\{v_i,v_j\}$ to $G$ if $\{v_i,v_j\}\notin E$ with $i\neq j$.
\end{itemize}
If the resulting graph is not isomorphic to a graph already in the collection then we add it to our collection. The maximum number of vertices is determined by the computing capacity and storage for the generated graphs.

We are using a personal desktop computer, with ten vertices, which begins to require secondary memory instead of RAM for both calculations and storage. For this reason, the job is limited to nine vertices.

In the repository \cite{EGC}, codes  in \texttt{C/C++} are shared as well as some databases of adjacency matrices and geometric representations of the graphs generated. We also made available the script for the \emph{canonical labeling}, which is necessary for solving the \textit{graph isomorphism problem}. Such algorithms are also available in \cite{Mckay}, even faster optimized versions in some cases.

\subsection{Iterated construction of contractible graphs}

A recursive algorithm is then shown to determine if a family belongs to the family $\SIC$. In other words, if a graph can be reduced to a point using only the elimination operation of vertices, then the order in which you choose to delete vertices does not matter. The only graph assumed to be known as part of the family $\SIC$ is $K_1$.

The proposed algorithm assumes that there are functions or procedures to calculate the open neighborhood of a vertex $N_G(v_i)$, and to remove a vertex from the graph $G - v_i$.
\newpage

\begin{algorithm}\label{Alg:contractible.graph}
    \SetKwInOut{Input}{Input}
    \SetKwInOut{Output}{Output}

    \Input{A graph $G$ and the cardinality $n = |V(G)|$ of the vertex set.}
    \Output{The logical \texttt{TRUE} if $G \in \SIC$, or \texttt{FALSE} in otherwise.}
    \eIf{$n = 0$}
      {
        return \texttt{FALSE}\;
      } 
      {
            \eIf{$n = 1$}
      {
        return \texttt{TRUE}\;
      } {
      	\For{$i \leftarrow 1$ \KwTo $n$}{
        	\If{\textnormal{\texttt{contractible.graph}($N_G(v_i)$, $|V(N_G(v_i))|$) = \texttt{TRUE}}}{
            \Return \texttt{contractible.graph}($G- v_i, n-1$)
            }
        }
    
        }
      \Return \texttt{FALSE};
      }
	  
    \caption{\texttt{contractible.graph}}
\end{algorithm}

\subsection{The contractible reduction algorithm}
Many graphs are not in the family $\SIC$; however, it is desirable to determine how far it is possible to eliminate vertices from graph $G$. The next algorithm uses Algorithm \ref{Alg:contractible.graph} to eliminate vertices when possible.

\begin{algorithm}[H]\label{Alg:contractible.reduction}
    \SetKwInOut{Input}{Input}
    \SetKwInOut{Output}{Output}

    \Input{A finite graph $G$.}
    \Output{A reduced graph $I(G; S)$ and a ordered maximal set of vertices $S$.}

    \texttt{reduced} $\leftarrow$ \texttt{FALSE}\;
    $S \leftarrow \emptyset$\;
    \While{\textnormal{\texttt{reduced = FALSE}}}{
    	\texttt{reduced} $\leftarrow$ \texttt{TRUE} \;
    	\For{$v \in V(G)$}{
    		\If{\textnormal{\texttt{contractible.graph($N_G(v_i)$, $|V(N_G(v_i))|$) = TRUE }}}{
        	\texttt{reduced} $\leftarrow$ \texttt{FALSE}\;
			update graph $G \leftarrow G - v$\;
            $S \leftarrow S \cup \{v\}$\;
            break \textbf{for}
            }
        }
    }
    \Return $(G, S)$
    \caption{\texttt{contractible.reduction}}
\end{algorithm}

In \cite{EGC} the \texttt{C/C++} code for Algorithm \ref{Alg:contractible.reduction} can be found. The algorithm \texttt{contractible.reduction} preserves the homology groups; moreover, as we explain in the next section, the algorithm\\ \texttt{contractible.reduction} is compatible with persistent homology, i.e., the persistent homology of a family of graphs can be recovered from the sequence of the homology groups of the reduced graphs.

The algorithm \texttt{contractible.reduction} returns a maximally reduced graph with respect to deleting vertices. Reducing the graph $I(G;S)$, however, by using a contractible deleting edge is also possible as shown in Figure \ref{contractible-edge}.

\begin{figure}[h]
\begin{center}
\includegraphics[scale=0.6]{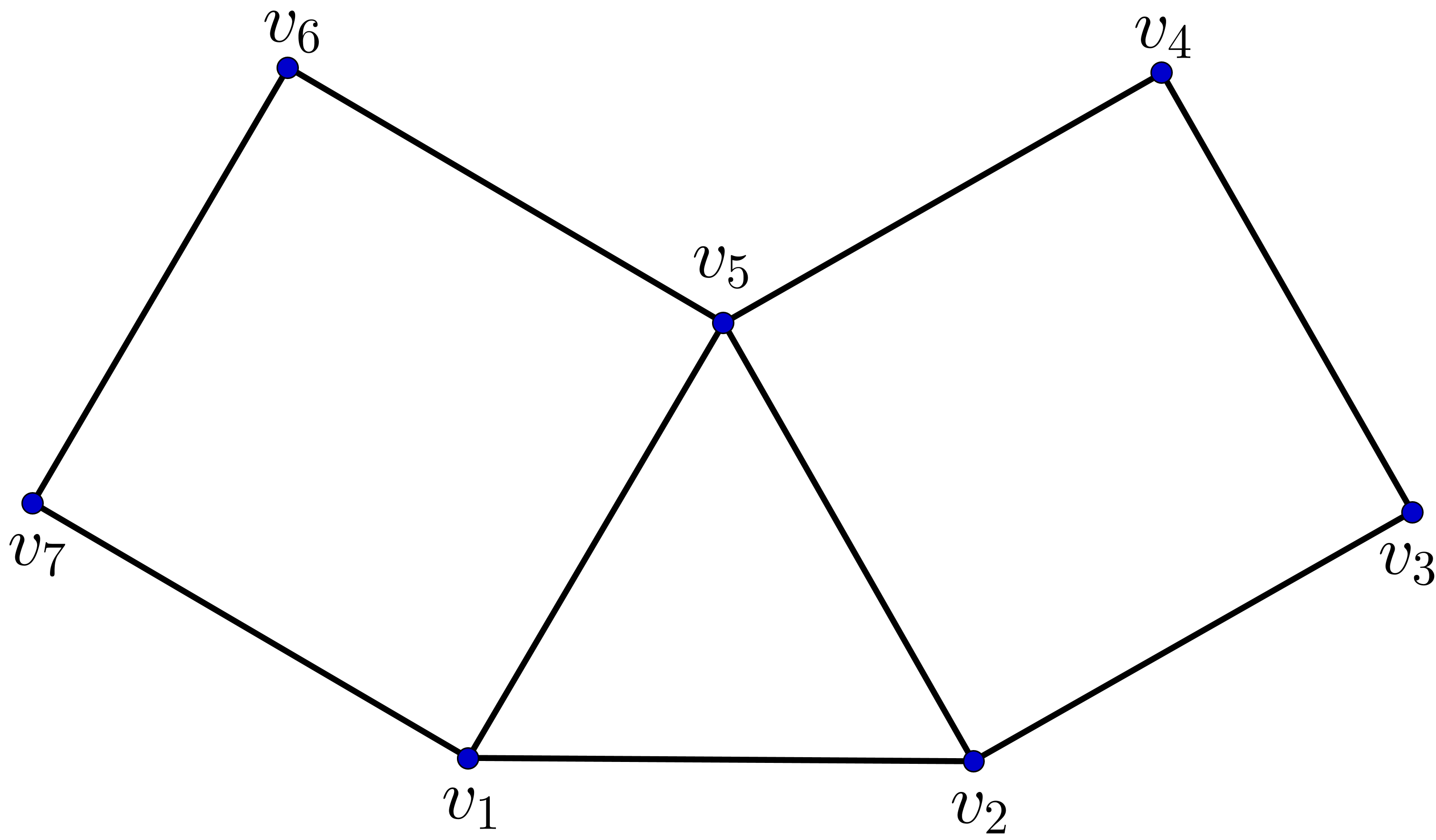}
\end{center}
\caption{Contractible-reduced graph with a contractible edge.}
\label{contractible-edge}
\end{figure}

For the graph in Figure \ref{contractible-edge}, we have $I(G;S) = G$, i.e., $S=\emptyset$. The edge $\{v_1, v_2\}$, however, can be deleted by the contractible transformation (\emph{I3}), even when it is not possible to delete any vertex of $G$ through a contractible transformation.

\begin{remark}
Families $\II_S$ and $\CC$ are infinite, but subfamilies can be calculated to make the comparison. In repository \cite{EGC} are the \texttt{C/C++} codes to calculate one subfamily limited to nine vertices and the adjacent matrix, including some geometrical representations. The algorithm to obtain the subfamily $\CC$ is by simple inspection; however, the code is provided too.
\end{remark}

\section{Homological properties of \texorpdfstring{$\II$}{I}-contractible transformations} \label{GH-CT}

Following the notation introduced in \cite{Ivashchenko:homology} and \cite{Ivashchenko:contractible}, we recall the definitions of chains, boundaries, cycles as well as induced homomorphisms and homology classes, in order to study some homological properties of the $\II$-contractible transformations and an application to computing the persistent homology of the Vietoris-Rips complex.

Let $G$ be a finite simple graph. For any complete subgraph on $n+1$ vertices $v_0, v_1, \ldots, v_n$ corresponds an $n$-simplex $\sigma^n = [v_0 v_1  \ldots v_n] \in \Delta(G)$. The oriented boundary $\partial \sigma^n$ of $\sigma^n$ is defined as the formal linear combination of its complete subgraphs on $n$ vertices:
\begin{linenomath*}
\[ \partial \sigma^n = \partial [v_0 v_1 \cdots v_n] = \sum_{k=0}^n (-1)^k [v_0 v_1 \cdots \hat{v}_k \cdots v_n].\]
\end{linenomath*}
The hat over the vertex $v_k$ means that such vertex must be omitted. An orientation for every complete subgraph is defined by restricting an arbitrary order for the vertices up to even permutations.

Let $A$ be an Abelian group; $A$ is usually taken as the integers group or a finite field. An $n$-chain of the clique complex $\Delta(G)$, with coefficients in $A$, is defined to be a formal linear combination of distinct simplices $\sigma^n$ of the clique complex:
\begin{linenomath*}
\[ c_n = \sum_{k} \alpha_k \sigma^n_k \]
\end{linenomath*}
for $\alpha_k \in A$. The addition of $n$-chains is defined in the obvious way. We then have the chain group $C_n(\Delta(G))$ of all $n$-chains in $\Delta(G)$.
Define the boundary operator $\partial : C_n(\Delta(G)) \to C_{n-1}(\Delta(G))$ by linear extension.

The boundary of a 0-chain is defined as zero. It can be proven directly that $\partial^2 = 0$, then we have a chain complex
\begin{linenomath*}
\[ \xymatrix{\cdots \ar[r]^-\partial & C_n(\Delta(G)) \ar[r]^-\partial & C_{n-1}(\Delta(G))  \ar[r]^-\partial & \cdots \ar[r]^-\partial  & C_1(\Delta(G)) \ar[r]^-\partial & C_0(\Delta(G)) \ar[r] & 0}.\]
\end{linenomath*}
 
The $n$-dimensional homology group of the graph $G$, with coefficients in $A$, is defined by
\begin{linenomath*}
\[ H_n(\Delta(G); A) = \displaystyle \frac{\ker\ (\partial : C_n(\Delta(G)) \to C_{n-1}(\Delta(G)))}{\mathrm{Im}\ (\partial : C_{n+1}(\Delta(G)) \to C_{n}(\Delta(G)))}.\]
\end{linenomath*}
The $n$-chains in the subgroup $Z_n(\Delta(G)) := \ker\ (\partial : C_n(\Delta(G)) \to C_{n-1}(\Delta(G)))$ are called $n$-cycles, and the $n$-chains in the subgroup $B_n(\Delta(G)) := \mathrm{Im}\ (\partial : C_{n+1}(\Delta(G)) \to C_{n}(\Delta(G)))$ are called $n$-boundaries. 

The homology groups of clique complexes allow us to distinguish between non-equivalent graphs, i.e., graphs with non-isomorphic homology groups of their clique complexes are not equivalent. For the graph $G$ to the left in Figure \ref{fig:1}, it can be proven that $H_0(\Delta(G); A) = A$, $H_1(\Delta(G); A) = A$ and $H_n(\Delta(G); A) = 0$ for every integer $n\geq 2$. Such groups can be interpreted in the following way: the clique complex of $G$ consists of one connected component ($H_0(\Delta(G); A) = A$), has one 1-dimensional hole ($H_1(\Delta(G); A) = A$) and no holes in higher dimensions ($H_n(\Delta(G); A) = A$, $n \geq 2$). The notion of an $n$-dimensional hole can be thought as an $n$-sphere embedded in the clique complex of $G$, assuming that every \textit{complete subgraph} $K_n$ corresponds to a $(n-1)$-dimensional filled disk.

If there is no risk of confusion, we will omit the coefficients group.

In \cite[Th. 4.9]{Ivashchenko:homology} it was proven that the contractible transformation $I: \GG \to \GG, G \mapsto I(G; v)$ does not change the homology groups of $\Delta(G)$ when $N_G(v) \in \II$, that is, 
\begin{linenomath*}
\[ H_*(\Delta(G)) \cong H_*(\Delta(I(G; v))) \cong H_*(\Delta(G-v)) . \]
\end{linenomath*}

Actually, such claim can be obtained directly from Theorem \ref{inclusionIC}, from the fact that $\Delta(G) \searrow \Delta(G-v)$ and that the homology is a homotopy invariant.

Let's see how the isomorphism $H_n(\Delta(G)) \cong H_n(\Delta(I(G; v)))$, induced by $I: \GG \to \GG, G \mapsto I(G; v)$, can be given explicitly. Let $c_n \in C_n(\Delta(G))$ be an $n$-chain, we then have that $c_n = a_n + v*b_{n-1}$ for $a_n \in \Delta(I(G;v))$ and $b_{n-1} \in \Delta(N_G(v))$. If $c_n \in Z_n(\Delta(G))$, then $\partial c_n = \partial a_n + b_{n-1} - v*\partial b_{n-1} = 0$ implies $\partial a_n + b_{n-1} = 0$ and $\partial b_{n-1} = 0$. From the last equality and the contractibility of $N_G(v)$, there exists an $n$-chain $b_n \in C_n(\Delta(N_G(v)))$ such that $\partial b_n = b_{n-1}$. We then have 
\begin{linenomath*}
\[ c_n = a_n + v*\partial b_n \ \mathrm{and}\  \partial (a_n + b_n) = 0.\]
\end{linenomath*}

Since $\partial (v*b_n) = b_n - v*\partial b_n$, the above equation can be written as 
\begin{linenomath*}
\[ c_n = (a_n + b_n) - \partial (v * b_n). \]
\end{linenomath*}

The isomorphism $I_* : H_n(\Delta(G)) \to H_n(\Delta(I(G; v)))$ is induced by the homomorphism
\begin{linenomath*}
\[ I_{\#}(v) : Z_n(\Delta(G)) \to Z_n(\Delta(I(G; v))), (a_n + b_n) - \partial (v * b_n) \mapsto a_n + b_n. \]
\end{linenomath*}

For a sequence of $\II$-contractible vertices $S = (v_{1}, v_{2}, \ldots, v_{k})$, we have a family of induced homomorphisms in cycles:
\begin{linenomath*}
\[ \xymatrix{ Z_*(\Delta(G)) \ar[r]^-{I_{\#}(v_{1})} &  Z_*(\Delta(G-\{v_{1}\})) \ar[r]^-{I_{\#}(v_{2})} & \cdots \ar[r]^-{I_{\#}(v_{k})} & Z_*(\Delta(G - \{v_{1}, v_{2}, \ldots, v_{k}\})).} \]
\end{linenomath*}
We denote the composition of such homomorphisms by $I_{\#}(S)$, i.e.,
\begin{linenomath*}
\[ I_{\#}(S) = I_{\#}(v_{1}, v_{2}, \ldots, v_{k}) =  I_{\#}(v_{k}) \circ \cdots \circ I_{\#}(v_{2}) \circ I_{\#}(v_{1}),\]
\end{linenomath*}
Define an analogous notation for the induced homomorphism in homology \[I_{*}(S) : H_*(\Delta(G)) \to H_*(\Delta(I(G;S))).\]

\begin{lemma}\label{diagram} 
Let $G_1$ be a finite graph and let $G_0 \subset G_1$ be a subgraph. If $S_0$ and $S_1$ are two sequences of $\II$-contractible vertices of $G_0$ and $G_1$, respectively, there then exists an homomorphism $\iota_* : H_*(\Delta(I(G_0; S_0))) \to H_* (\Delta(I(G_1; S_1))) $ such that the following diagram commutes:
\begin{linenomath*}
\begin{gather}
\begin{aligned}
\xymatrix{ H_* (\Delta(G_0)) \ar[r]^-{i_*} \ar[d]_-{I_*(S_0)}^-{\cong} & H_*(\Delta(G_1)) \ar[d]_-{I_*(S_1)}^-{\cong} \\ H_*(\Delta(I(G_0; S_0))) \ar@{-->}[r]^-{\iota_{*}} & H_*(\Delta(I(G_1; S_1))).}
\end{aligned}
\label{commutative-diagram}
\end{gather}
\end{linenomath*}
\end{lemma}

\begin{proof}
The iterated contractible contractions over $G_0$ and $G_1$, given by the sets $S_0$ and $S_1$, respectively, induce the following diagram:
\begin{linenomath*}
\[ \xymatrix{ Z_* (\Delta(G_0)) \ar[r]^-{I_\# (v_1)} \ar[d]_-{i_\#} & Z_*(\Delta(I(G_0; v_1)) \ar[r]^-{I_\# (v_2)} & \cdots  \ar[r]^-{I_\# (v_m)} & Z_*(\Delta(I(G_0; S_0))) \\
Z_* (\Delta(G_1)) \ar[r]^-{I_\# (w_1)} & Z_*(\Delta(I(G_1; w_1)) \ar[r]^-{I_\# (w_2)} & \cdots  \ar[r]^-{I_\# (w_n)} & Z_*(\Delta(I(G_1; S_1))) .} \]
\end{linenomath*}
The vertical arrow is induced by the inclusion $i : G_0 \hookrightarrow G_1$. From \cite{Ivashchenko:homology}, each horizontal arrow in the above diagram is an isomorphism, then $I_\#(S_0):= I_\#(v_m)\circ \cdots \circ I_\#(v_1)$ and $I_\#(S_1):= I_\#(w_m)\circ \cdots \circ I_\#(w_1)$ are also isomorphisms. Let $\iota : Z_*(\Delta(I(G_0; S_0))) \to Z_*(\Delta((G_1; S_1))$ be defined by $z \mapsto I_{\#}(S_1) \circ i_\# \circ I_{\#}(S_0)^{-1}(z)$. Thus, we have the following two commutative diagrams:
\vspace*{-4mm}
\begin{multicols}{2}
\[ \xymatrix{ Z_* (\Delta(G_0)) \ar[r]^-{I_\# (v_1)} \ar[d]_-{i_\#} & Z_*(\Delta(I(G_0; S_0)))  \ar@{-->}[d]^-{\iota} \\ Z_* (\Delta(G_1)) \ar[r]^-{I_\# (w_1)} & Z_*(\Delta(I(G_1; S_1)))} \]

\[ \xymatrix{ B_* (\Delta(G_0)) \ar[r]^-{I_\# (v_1)} \ar[d]_-{i_\#} & B_*(\Delta(I(G_0; S_0)))  \ar@{-->}[d]^-{\iota} \\ B_* (\Delta(G_1)) \ar[r]^-{I_\# (w_1)} & B_*(\Delta(I(G_1; S_1))).} \]
\end{multicols}
The first diagram from left to right is commutative by construction, and the second one is commutative by restriction to the boundaries' subgroup. Taking the induced morphisms in the quotient groups (homology groups), we obtain the commutative diagram (\ref{commutative-diagram}).
\end{proof}

\subsection{Application: Persistent homology of the Vietoris-Rips complex} \label{Appli} 

In some applications of algebraic topology, such as topological data analysis (TDA), the \textit{shape of a data cloud} is studied through the persistent homology of a filtered simplicial structure (cf. \cite{Carlsson2009,Patania2017}). Choosing the appropriate simplicial structure and the parameter or variable used to construct the filtration are key steps in the topological analysis. A standard simplicial structure which defines a geometry over a data cloud is known as the Vietoris-Rips complex.

Let $N$ be a finite point cloud in some metric space $(M, \mathrm{d})$, and let $\varepsilon$ be a nonnegative real number. The Vietoris-Rips complex $\mathrm{VR}(N; \varepsilon)$ is the abstract simplicial complex with $N$ as set of vertices, and $\sigma \subset N$ is a simplex if, and only if, the distance between any two points in $\sigma$ is less than or equal to $\varepsilon$. Clearly, $\mathrm{VR}(N; \varepsilon) \subset \mathrm{VR}(N; \varepsilon')$ for all $\varepsilon \leq \varepsilon'$. In addition, sorting all values $\mathrm{d}(u, v)$ for any $u, v \in N$, let's say $\{0 = \varepsilon_0 < \varepsilon_1 < \ldots < \varepsilon_m\}$, we have the Vietoris-Rips filtration
\begin{linenomath*}
\[ \mathrm{VR}(N; \varepsilon_0) \subset \mathrm{VR}(N; \varepsilon_1) \subset \cdots \subset \mathrm{VR}(N; \varepsilon_m). \]
\end{linenomath*}

We can denote $\mathrm{VR}(N; \varepsilon_i)$ simply by $\mathrm{VR}_i$ when there is no risk of confusion about the point cloud or the filtration. In \cite{Zomorodian2010} several algorithms can be found for the construction of this simplicial structure.

Since the shape of a point cloud can be summarized, in a certain sense, by the persistent homology of its (filtered) Vietoris-Rips complex, such construction is really useful in applications.

Given a finite sequence of simplicial complexes $\Delta_0 \subset \Delta_1 \subset \cdots \subset \Delta_m$, for $i, j \in \{0, 1, \ldots, m \}$ such that $i \leq j$, the \emph{$(i, j)$-persistent $p$-homology group} $H_p^{i, j}$ of the filtration is defined as $\mathrm{Im}( H_p(\Delta_i) \to H_p(\Delta_j))$. The generators of $H_p^{i, j}$ are those ``holes'' that survive from $\Delta_i$ to $\Delta_j$. See \cite{Zomorodian2005} and \cite{Ghrist:2007} for a deeper analysis.

\begin{theorem}\label{persistence-preserved}
Let $\mathrm{VR}_0 \subset \mathrm{VR}_1 \subset \cdots \subset \mathrm{VR}_m$ be the Vietoris-Rips filtration of any finite point cloud, and let $H_p^{i,j}$ be the $(i,j)$-persistent $p$-homology group of the filtration.  If $G_i := \mathrm{VR}_i^{(1)}$ is given by the 1-skeleton of the complex $\mathrm{VR}_i$ and $S_i$ is a sequence of $\II_S$-contractible vertices of $G_i$, for all $1 \leq i \leq m$, then
\begin{linenomath*}
\begin{equation}\label{iso-persistent-homology}
H_p^{i,j} \cong \mathrm{Im}(\iota_* : H_p(\Delta(I(G_i; S_i))) \to H_p(\Delta(I(G_j; S_j)))).
\end{equation}
\end{linenomath*}
\end{theorem}

\begin{proof}
By definition $H_p(\mathrm{VR}_i) \cong H_p(\Delta(G_i))$ and from Lemma \ref{diagram}, we have the commutative diagram
\begin{linenomath*}
\[ \xymatrix{ H_p (\mathrm{VR}_i) \cong H_p(\Delta(G_i)) \ar[r] \ar[d]_-{I_*(S_i)}^-{\cong} & H_p (\mathrm{VR}_{i+1})  \cong H_p(\Delta(G_{i+1})) \ar[d]_-{I_*(S_{i+1})}^-{\cong} \\ H_p (\Delta(I(G_i; S_i))) \ar[r]^-{\iota_{*}} & H_p (\Delta(I(G_{i+1}; S_{i+1}))).} \]
\end{linenomath*}
The isomorphism (\ref{iso-persistent-homology}), for any $j \geq 1$, is a consequence of the functoriality of the homology theory and the Persistence Equivalence Theorem \cite[Sec. VII.2]{Edelsbrunner:08}.
\end{proof}

Theorem \ref{persistence-preserved} provides an alternative method for computing the persistent homology of the Vietoris-Rips filtration $\{\mathrm{VR}_i\}_{i=1}^m$ through the sequence of the homology groups of the simplicial complexes $\{\Delta(I(\mathrm{VR}_i^{(1)}; S_i))\}_{i=1}^m$. The remarkable fact about this theorem is that even if $\{\Delta(I(G_i; S_i))\}_{i=1}^m$ is not a simplicial filtration (which is the general case), there is an induced homomorphism at the level of the homology groups that allows us to recover the persistent homology of $\{\mathrm{VR}_i\}_{i=1}^m$.

As we pointed out before, the Vietoris-Rips filtration is frequently used in many applications of topological data analysis, mainly because is enough to consider the distance matrix $D_N$ of a point cloud $N$ under study, to construct such filtration, or equivalently, to consider the (filtered) family of graphs defined by $D_N$ and take then the clique complex of each one. However, the computational cost of this approach can be really expensive even for ``small'' point clouds, since for larger values of the proximity parameter for a point cloud on $n$ points, we could get up to $2^n - 1$ simplices. 

Our approach proposes a pre-processing of the data cloud at each level of the filtration for then to proceed to compute the homology groups as well as the induced homomorphisms. Such approach has been successfully applied in the software Perseus \cite{PERSEUS}, which performs certain homology-preserving Morse theoretic reductions on several combinatorial structures including the simplicial one. Just like for contractible transformations, there are some transformations in discrete Morse theory that preserve the homology groups (see \cite{Forman:1998} and \cite{Nanda:2013}). Another work with this approach can be found in \cite{Dlotko:14}. There are actually many libraries and software to compute persistent homology from several approaches, see \cite{Otter2017} for an extensive list. One of these programs is the software Ripser (cf. \cite{Ripser}), a very efficient software for computing the persistent homology of the Vietoris-Rips complex. 

We conclude this section with an example of the persistent homology of the Vietoris-Rips complex of a point cloud in the plane.

\begin{example}

Let $N=\{v_1, \ldots, v_6\} \subset \mathbb{R}^2$ be a point cloud, and let 
\begin{linenomath*}
\begin{equation} \label{Example:filtration}
    \mathrm{VR}(N; \varepsilon_1 =0 ) \subset \mathrm{VR}(N; \varepsilon_2 = 1.5 ) \subset \mathrm{VR}(N; \varepsilon_3 = 2.1 ) \subset \mathrm{VR}(N; \varepsilon_4 = 2.6 ) \subset \mathrm{VR}(N; \varepsilon_5 = 2.7 ) 
\end{equation}
\end{linenomath*}
be the Vietoris-Rips filtration, as shown in the next picture.

\begin{figure}[htbp]
\begin{minipage}[t]{0.19\linewidth}
    \includegraphics[width=\linewidth]{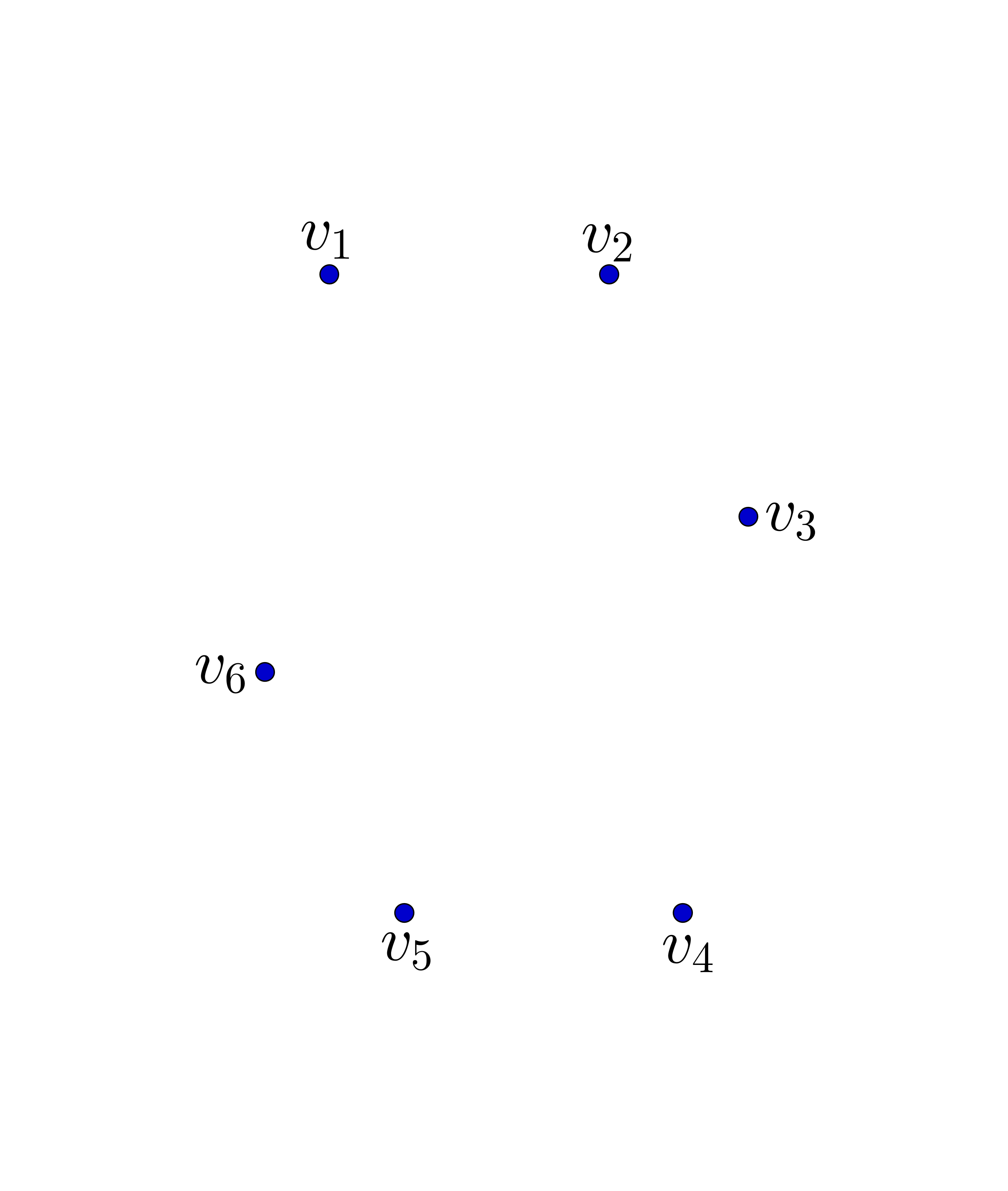}
\end{minipage}%
    \hfill%
\begin{minipage}[t]{0.19\linewidth}
    \includegraphics[width=\linewidth]{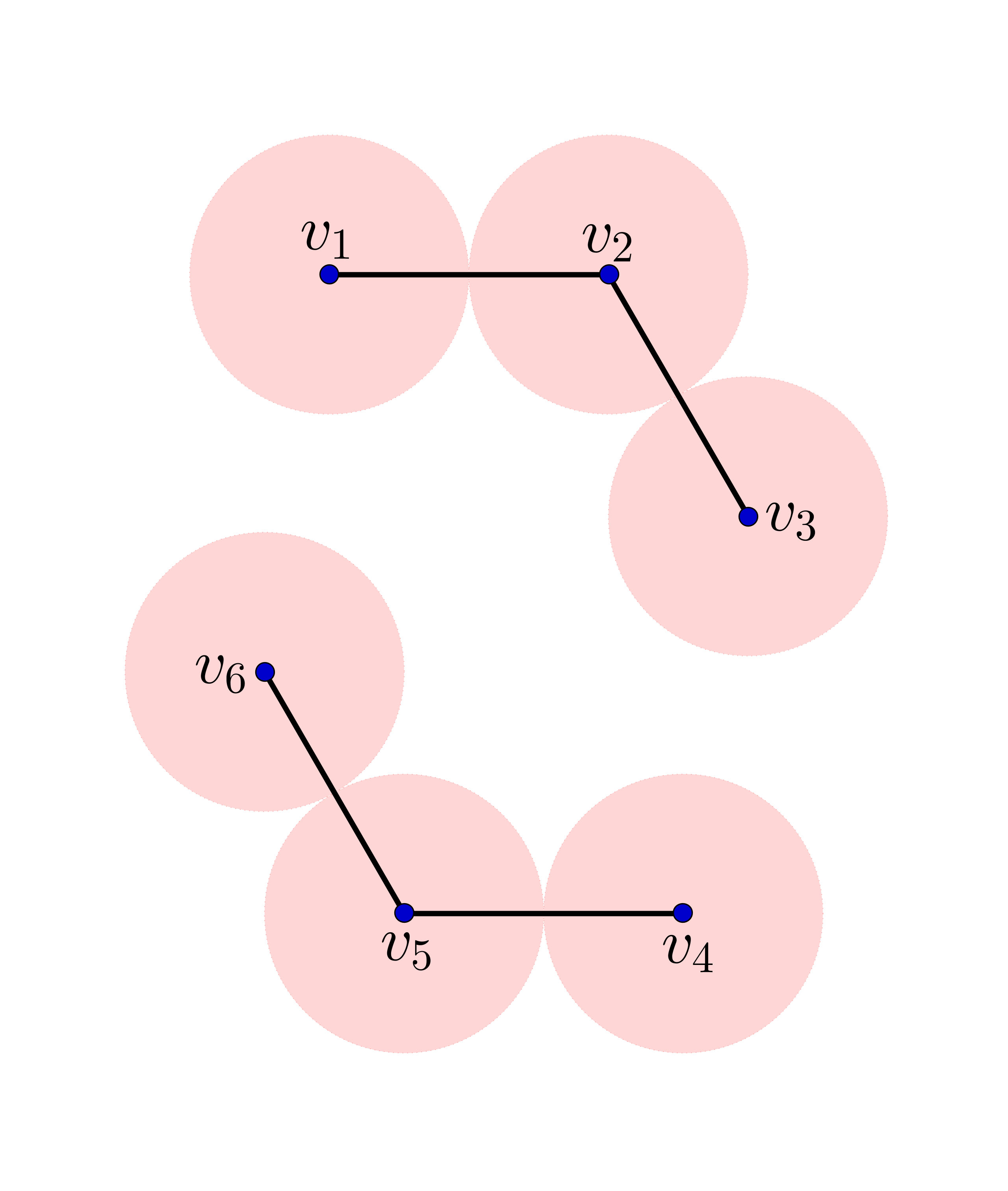}
\end{minipage} 
    \hfill%
\begin{minipage}[t]{0.19\linewidth}
    \includegraphics[width=\linewidth]{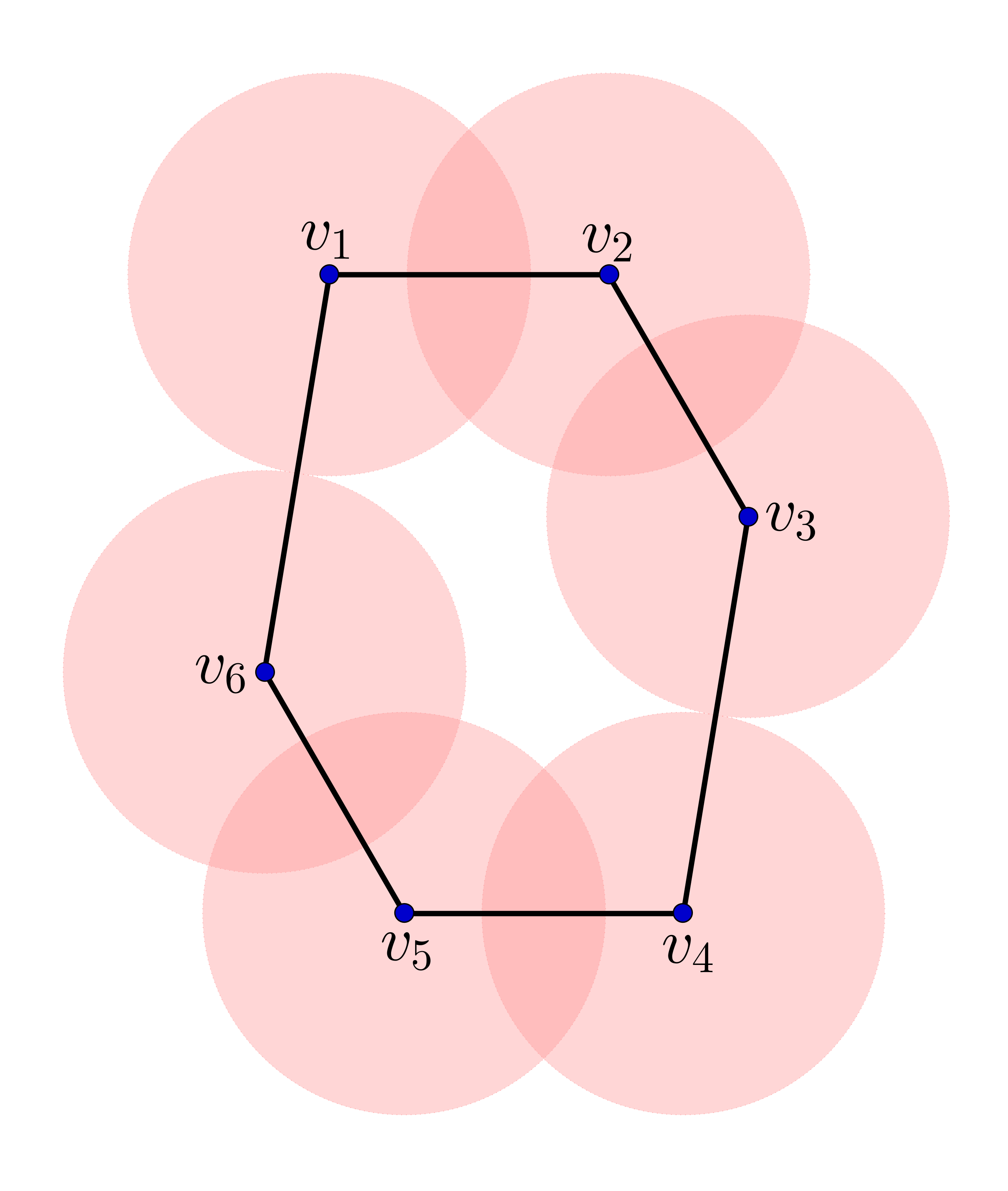}
\end{minipage} 
    \hfill%
\begin{minipage}[t]{0.19\linewidth}
    \includegraphics[width=\linewidth]{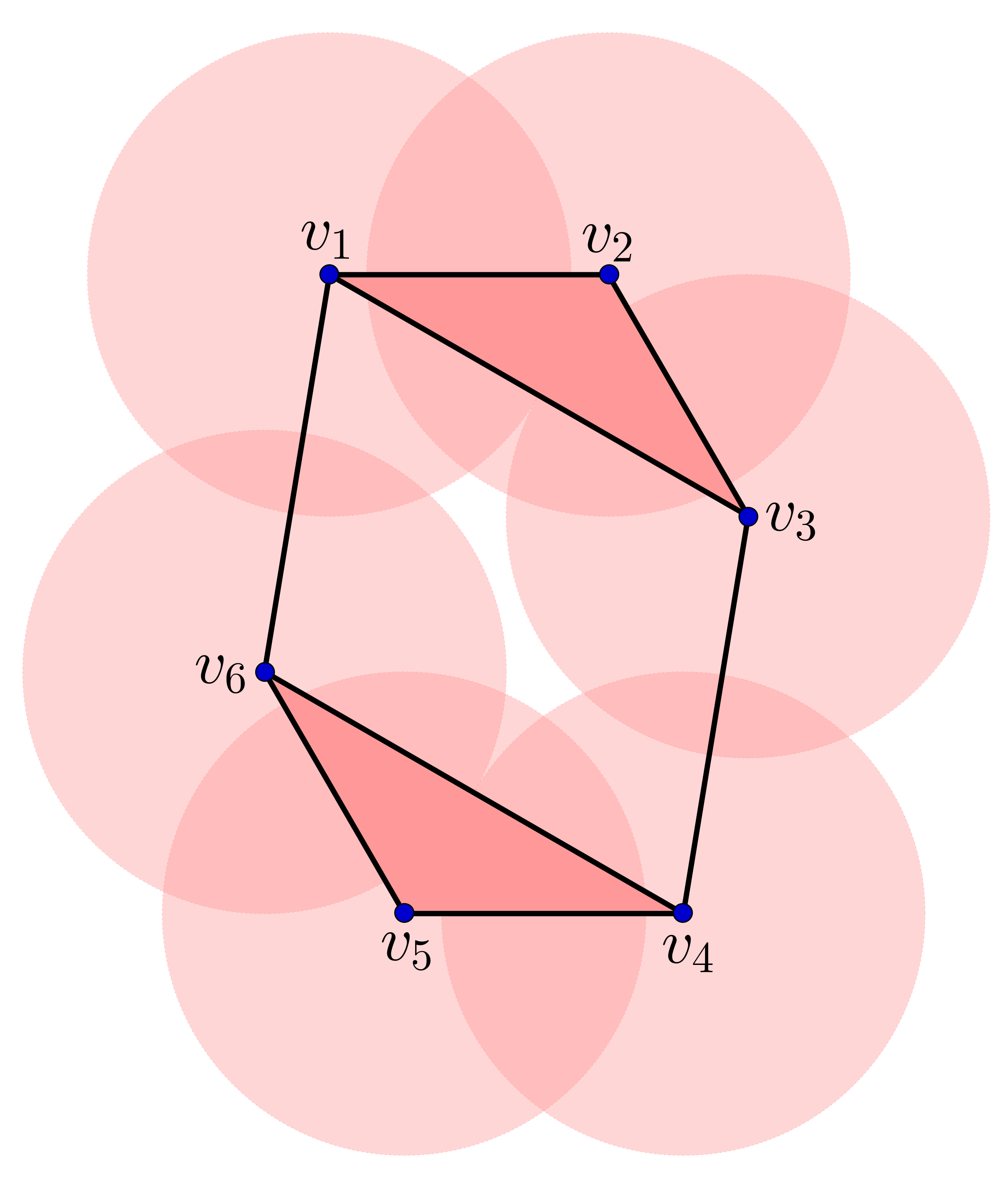}
\end{minipage} 
    \hfill%
\begin{minipage}[t]{0.19\linewidth}
    \includegraphics[width=\linewidth]{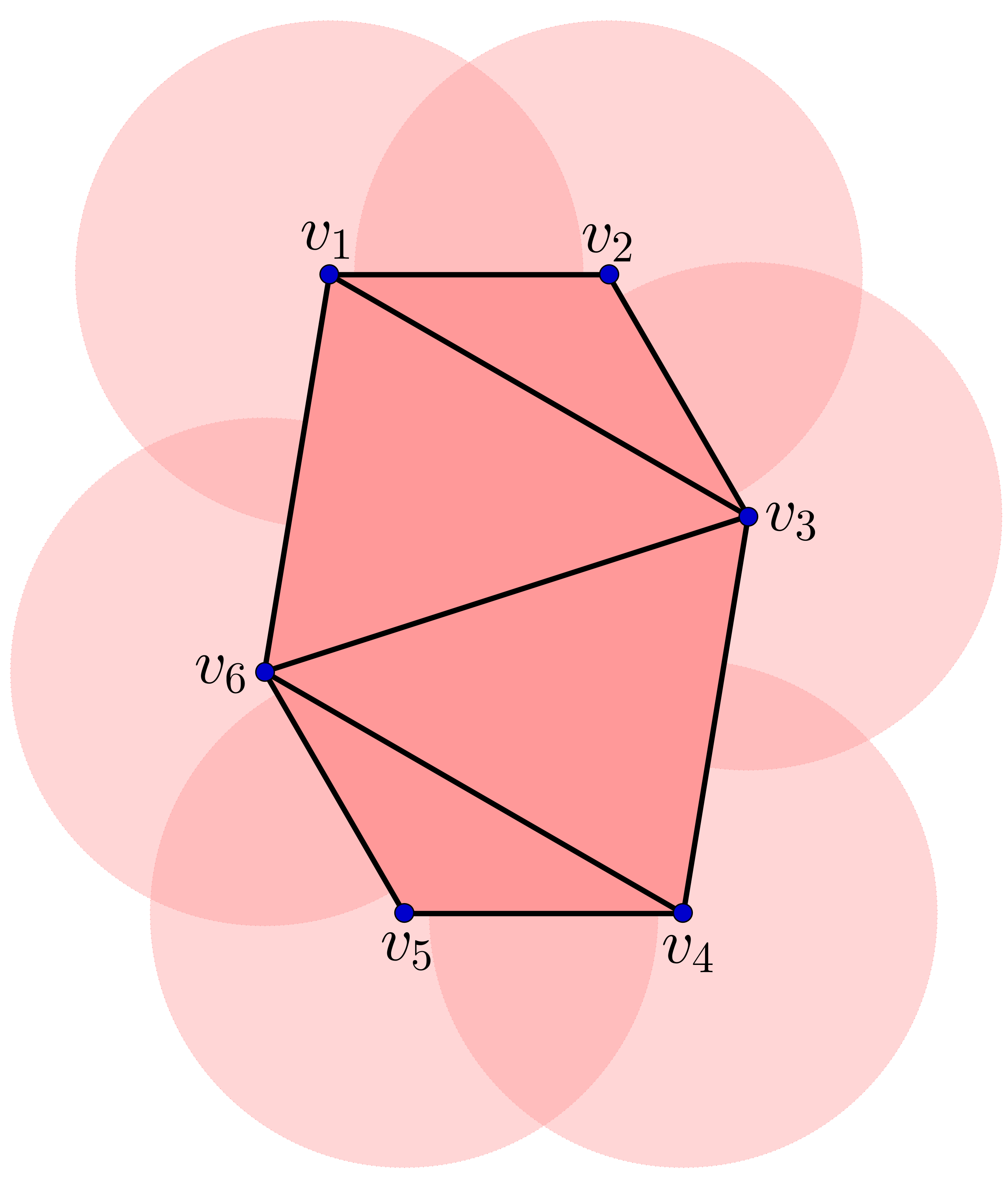}
\end{minipage} 
\end{figure}

The following picture shows the sequence of graphs given by the \texttt{contractible.reduction} algorithm over the 1-skeleton of each simplicial complex, $G_i := I(\mathrm{VR}(N; \varepsilon_i)^{(1)})$.

\begin{figure}[htbp]
\begin{minipage}[t]{0.19\linewidth}
    \includegraphics[width=\linewidth]{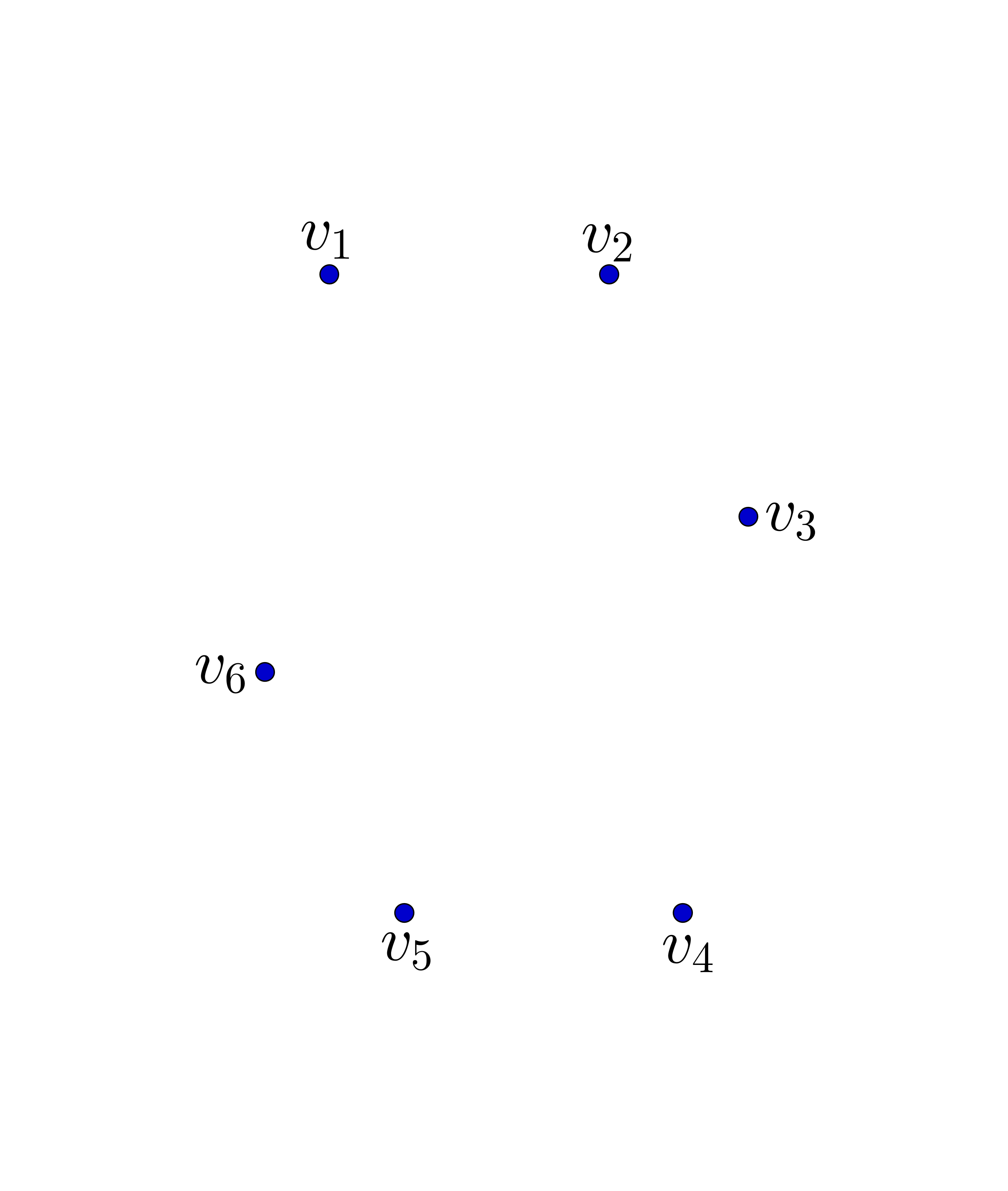}
\end{minipage}%
    \hfill%
\begin{minipage}[t]{0.19\linewidth}
    \includegraphics[width=\linewidth]{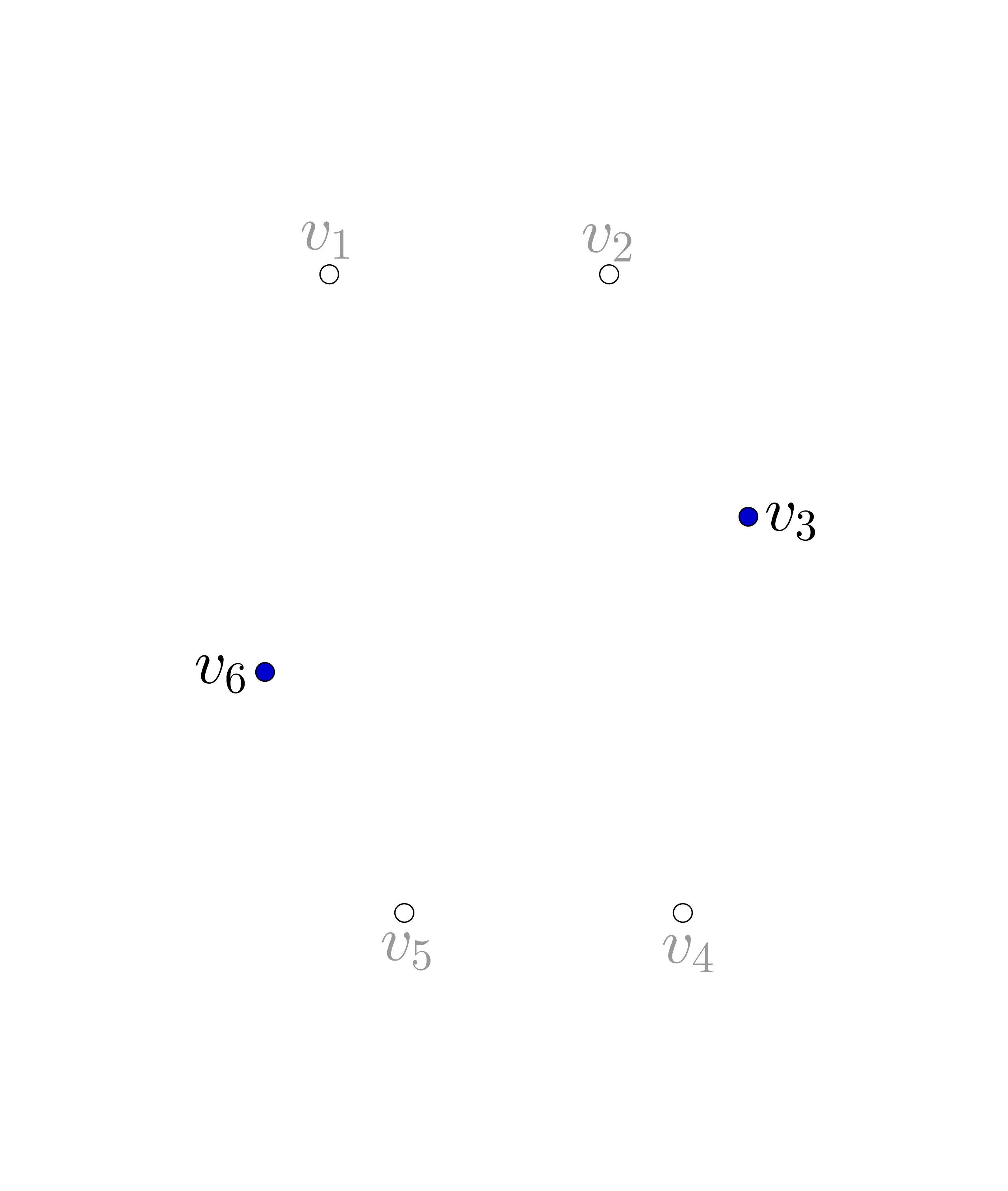}
\end{minipage} 
    \hfill%
\begin{minipage}[t]{0.19\linewidth}
    \includegraphics[width=\linewidth]{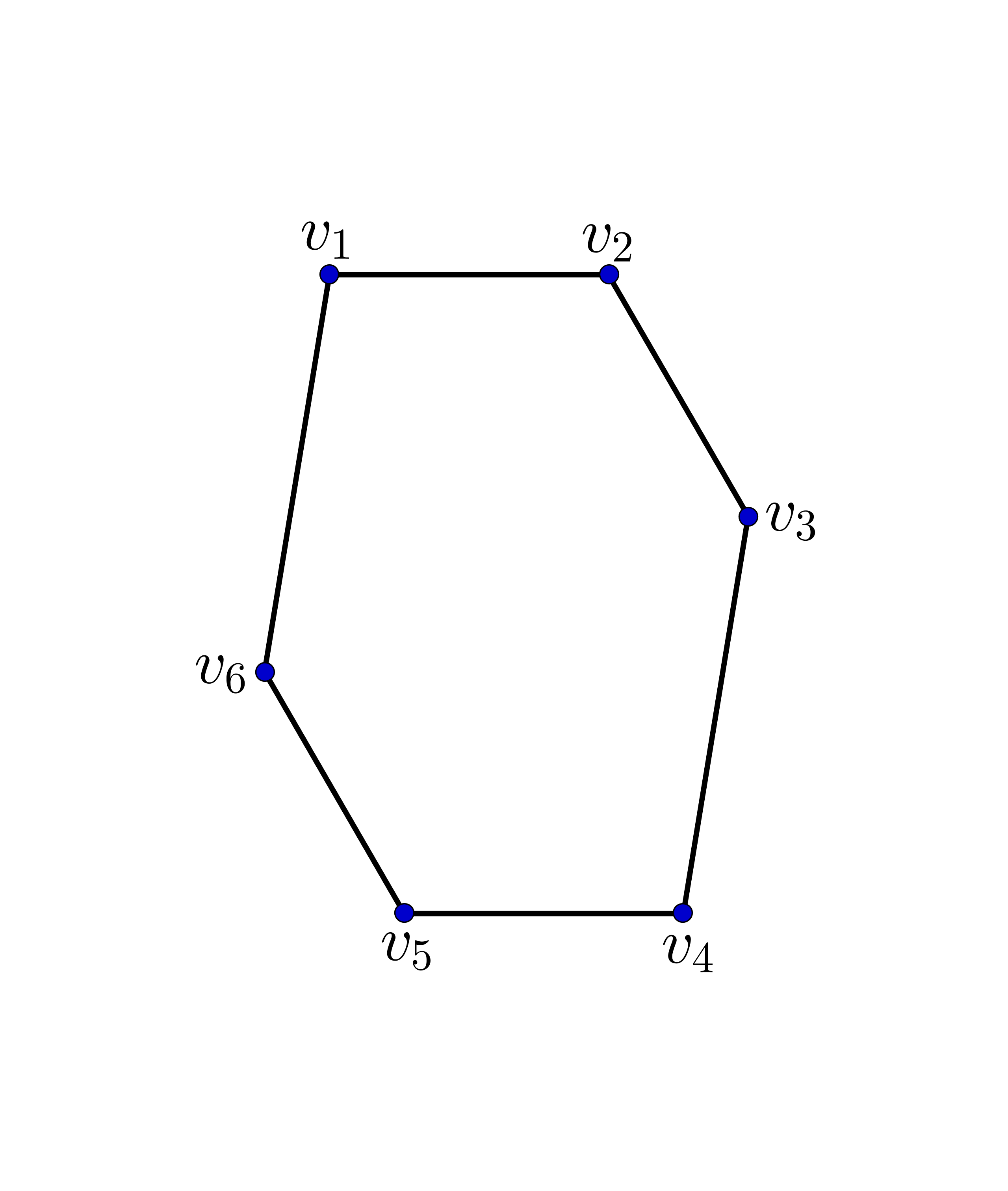}
\end{minipage} 
    \hfill%
\begin{minipage}[t]{0.19\linewidth}
    \includegraphics[width=\linewidth]{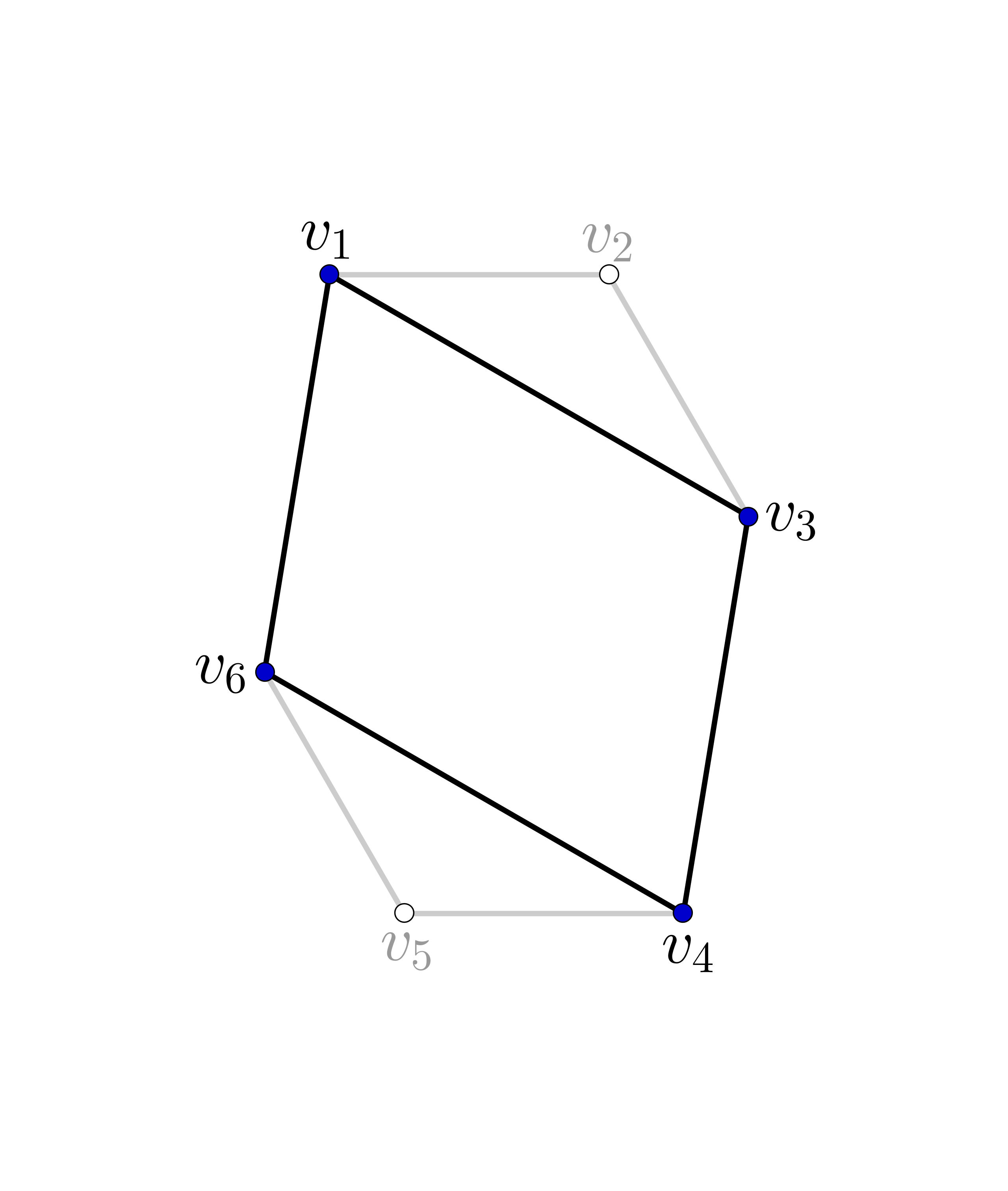}
\end{minipage} 
    \hfill%
\begin{minipage}[t]{0.19\linewidth}
    \includegraphics[width=\linewidth]{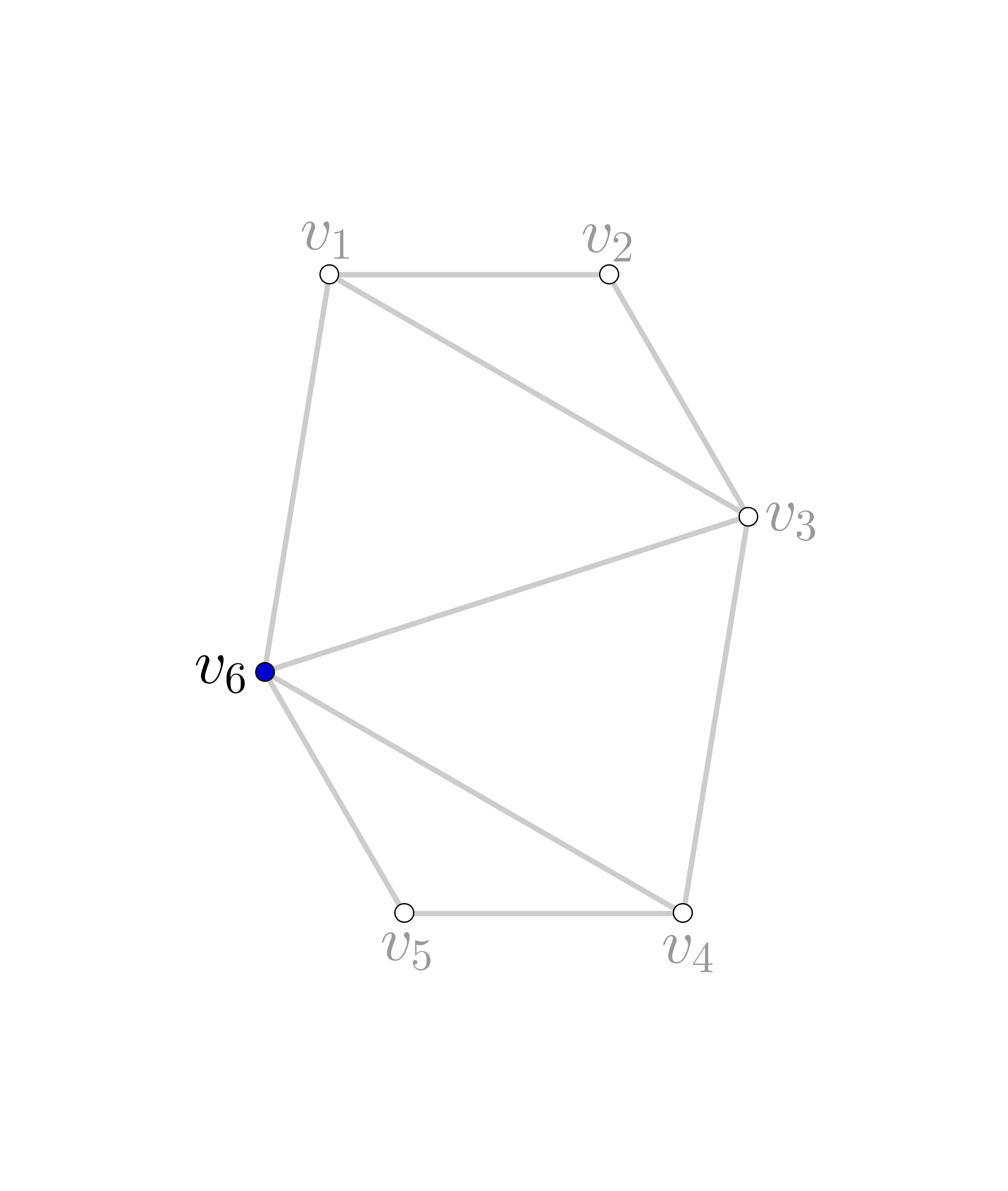}
\end{minipage} 
\end{figure}

Clearly, the sequence $\{G_i\}$ does not have a filtration structure. The following table shows the generators of the $p$-homology groups ($p=0,1$) of the clique complex for each reduced graph $G_i$.
\bigskip

\begin{center}
\begin{tabular}{cccccc}
    & $H_p(\Delta(G_1))$    & $H_p(\Delta(G_2))$    & $H_p(\Delta(G_3))$    & $H_p(\Delta(G_4))$    & $H_p(\Delta(G_5))$    \\ \hline
    \multicolumn{1}{|c|}{\multirow{6}{*}{$p=0$}} & \multicolumn{1}{c|}{$[v_1]$} &
    \multicolumn{1}{c|}{\multirow{3}{*}{$[v_3]$}} & \multicolumn{1}{c|}{\multirow{6}{*}{$[v_1]$}} &
    \multicolumn{1}{c|}{\multirow{6}{*}{$[v_1]$}} & \multicolumn{1}{c|}{\multirow{6}{*}{$[v_6]$}} \\ \cline{2-2}
    \multicolumn{1}{|c|}{} & \multicolumn{1}{c|}{$[v_2]$} &
    \multicolumn{1}{c|}{} & \multicolumn{1}{c|}{} &
    \multicolumn{1}{c|}{} & \multicolumn{1}{c|}{} \\ \cline{2-2}
    \multicolumn{1}{|c|}{} & \multicolumn{1}{c|}{$[v_3]$} & 
    \multicolumn{1}{c|}{} & \multicolumn{1}{c|}{} &
    \multicolumn{1}{c|}{} & \multicolumn{1}{c|}{} \\ \cline{2-3}
    \multicolumn{1}{|c|}{} & \multicolumn{1}{c|}{$[v_4]$} & 
    \multicolumn{1}{c|}{\multirow{3}{*}{$[v_6]$}} & \multicolumn{1}{c|}{} &
    \multicolumn{1}{c|}{} & \multicolumn{1}{c|}{} \\ \cline{2-2}
    \multicolumn{1}{|c|}{} & \multicolumn{1}{c|}{$[v_5]$} & 
    \multicolumn{1}{c|}{} & \multicolumn{1}{c|}{} &
    \multicolumn{1}{c|}{} & \multicolumn{1}{c|}{} \\ \cline{2-2}
    \multicolumn{1}{|c|}{} & \multicolumn{1}{c|}{$[v_6]$} &
    \multicolumn{1}{c|}{} & \multicolumn{1}{c|}{} & 
    \multicolumn{1}{c|}{} & \multicolumn{1}{c|}{} \\ \hline
    \multicolumn{1}{|c|}{$p=1$} & \multicolumn{1}{c|}{0} &
    \multicolumn{1}{c|}{0} & \multicolumn{1}{c|}{$\alpha$} &
    \multicolumn{1}{c|}{$\beta$} & \multicolumn{1}{c|}{0} \\ \hline
\end{tabular}
\end{center}

\noindent We are denoting $\alpha = [v_1v_2]+[v_2 v_3]+[v_3 v_4] + [v_4 v_5]+[v_5 v_6] + [v_1 v_6]$ and $\beta = [v_1 v_3] + [v_3 v_4] + [v_4 v_6] + [v_1 v_6]$. Such table of the persistent homology classes is usually represented graphically as a barcode or as a persistence diagram (see \cite{Ghrist:2007}).

\begin{figure}[hbt]
    \centering
    \includegraphics[width=0.4\textwidth]{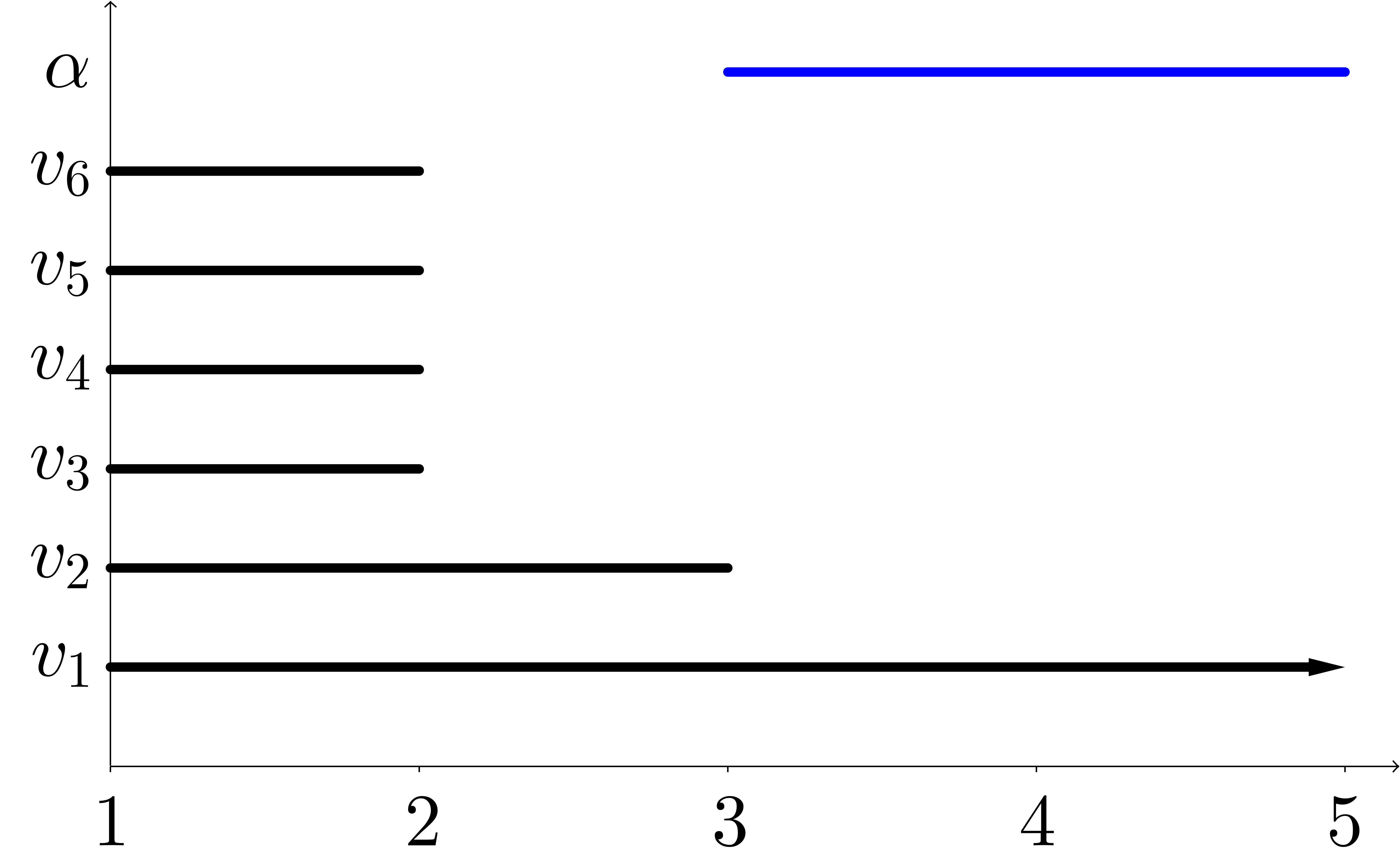}
    \caption{Barcode for the persistent homology of the Vietoris-Rips filtration (\ref{Example:filtration}).}
\end{figure}

\end{example}

At the repository in \cite{EGC}, some visual material about the construction of the Vietoris-Rips complex (as a graph) can be found next to the corresponding contractible reduced structure. The experiments show a behavior similar to the above example.

\bibliographystyle{siam}
\bibliography{BSMM_bibliography}
\end{document}